\newcommand\mn[1]{
%\marginnote{\tiny #1}
}
\newcommand\labmarg[1]{\label{#1}
\mn{\tiny $#1$}
}
\newcommand{\bol}[1]{\mbox{\boldmath$#1$}}
\newcommand{\bmu}{\bol{\mu}}
\newcommand{\btheta}{\bol{\theta}}
\newcommand{\bSigma}{\mathbf{\Sigma}}
\newcommand{\bL}{\mathbf{L}}
\newcommand{\bx}{\mathbf{x}}
\newcommand{\bl}{\mathbf{l}}
\newcommand{\bS}{\mathbf{S}}
\newtheorem{theorem}{Theorem}
\newtheorem{lemma}{Lemma}
\newtheorem{corollary}{Corollary}
\begin{document}

\begin{center}
\vspace*{2cm} \noindent {\bf \large Discriminant analysis in small and large dimensions}\\
\vspace{1cm} \noindent {\sc Taras Bodnar$^{a,}$\footnote{Corresponding Author: Taras Bodnar. E-Mail: taras.bodnar@math.su.se. Tel: +46 8 164562. Fax: +46 8 612 6717. This research was partly supported by the Swedish International Development Cooperation Agency (SIDA) through the UR-Sweden Programme for Research, Higher Education and Institutional Advancement. Stepan Mazur acknowledges financial support from the project ''Ambit fields: probabilistic properties and statistical inference'' funded by Villum Fonden}, Stepan Mazur$^{b}$, Edward Ngailo$^{a}$ and Nestor Parolya$^{c}$
}\\
\vspace{1cm}
{\it \footnotesize  $^a$
 Department of Mathematics, Stockholm University, Roslagsv\"{a}gen 101, SE-10691 Stockholm, Sweden}\\
{\it \footnotesize  $^b$ Unit of Statistics, School of Business, \"{O}rebro University, Fakultetsgatan 1, SE-70182 \"{O}rebro, Sweden
} \\
{\it \footnotesize  $^c$ Institute of Statistics, Leibniz University Hannover, K\"{o}nigsworther Platz 1, D-30167 Hannover, Germany
} \\

\end{center}

\begin{abstract}
We study the distributional properties of the linear discriminant function under the assumption of normality by comparing two groups with the same covariance matrix but different mean vectors. A stochastic representation for the discriminant function coefficients is derived which is then used to obtain their asymptotic distribution under the high-dimensional asymptotic regime. We investigate the performance of the classification analysis based on the discriminant function in both small and large dimensions. A stochastic representation is established which allows to compute the error rate in an efficient way. We further compare the calculated error rate with the optimal one obtained under the assumption that the covariance matrix and the two mean vectors are known. Finally, we present an analytical expression of the error rate calculated in the high-dimensional asymptotic regime. The finite-sample properties of the derived theoretical results are assessed via an extensive Monte Carlo study.
\end{abstract}

\noindent ASM Classification: 62H10, 62E15, 62E20, 60F05, 60B20\\
\noindent {\it Keywords:} discriminant function, stochastic representation, large-dimensional asymptotics, random matrix theory, classification analysis\\

\newpage
\section{Introduction}
In the modern world of science and technology, high-dimensional data are present in various fields such as finance, environment science and social sciences. In the sense of many complex multivariate dependencies observed in data, formulating correct models and developing inferential procedures are the major challenges. The traditional multivariate analysis considers fixed or small sample dimensions, while sample sizes approaching to infinity. However, its methods cannot longer be used in the high-dimensional setting where the dimension is not treated as fixed but it is allowed to be comparable to the sample size.

The covariance matrix is one of the mostly used way to capture the dependence between variables. Although its application is restricted only to linear dependence and more sophisticated methods, like copula, should be applied in the general case, modeling dynamics in the covariance matrix is still a very popular subject in both statistics and econometrics. Recently, a number of papers have been published which deal with estimating the covariance matrix (see, e.g., \citet{ledoit2003improved}, \citet{cai2011adaptive}, \citet{cai2011constrained}, \citet{agarwal2012noisy}, \citet{fan2008high}, \citet{fan2013large}, \cite{BodnarGuptaParolya2014,BGP2016}) and testing its structure (see, e.g., \citet{johnstone2001distribution}, \citet{bai2009corrections}, \citet{chen2010tests}, \citet{cai2011limiting}, \citet{jiang2013central}, \citet{GuptaBodnar2014}) in large dimension.

In many applications, the covariance matrix is accompanied by the mean vector. For example, the product of the inverse sample covariance matrix and the difference of the sample mean vectors is present in the discriminant function where a linear combination of variables (discriminant function coefficients) is determined such that the standardized distance between the groups of observations is maximized. A second example arises in portfolio theory, where the vector of optimal portfolio weights is proportional to the products of inverse sample covariance matrix and the sample mean vector (see \cite{BOK2011}).

The discriminant analysis is a multivariate technique concerned with separating distinct sets of objects (or observations) (\cite{JRO2007}). Its two main tasks are to distinguish distinct sets of observations and to allocate new observations to previously defined groups (\cite{RC2012}). The main methods of the discriminant analysis are the linear discriminant function and the quadratic discriminant function. The linear discriminant function is a generalization of Fisher linear discriminant analysis, a method used in statistics, pattern recognition and machine learning to find a linear combination of features that characterizes or separates two or more groups of objects in the best way. The application of the linear discriminant function is restricted to the assumption of the equal covariance matrix in the groups to be separated. Although the quadratic discriminant function can be used when the latter assumption is violated, its application is more computational exhaustive, needs to estimate the covariance matrices of each group, and requires more observations than in the case of linear discriminant function (\cite{NF2013}). Moreover, the decision boundary is easy to understand and to visualize in high-dimensional settings, if the  linear discriminant function is used.

The discriminant analysis is a well established topic in multivariate statistics. Many asymptotic results are available when the sample sizes of groups to be separated are assumed to be large, while the number of variables is fixed and significantly smaller than the sample size (see, e.g., \cite{M1982}, \cite{RC2012}). However, these results cannot automatically be transferred when the number of variables is comparable to the sample size which is known in the statistical literature as the high-dimensional asymptotic regime. It is remarkable that in this case the results obtained under the standard asymptotic regime can deviate significantly from those obtained under the high-dimensional asymptotics (see, e.g., \citet{BaiSilverstein2010}). \cite{fujikoshi1997asymptotic} provided an asymptotic approximation of the linear discriminant function in high dimension by considering the case of equal sample sizes and compared the results with the classical asymptotic approximation by \cite{wyman1990comparison}. For the samples of non-equal sizes, they pointed out that the high-dimensional approximation is extremely accurate. However, \cite{tamatani2015asymptotic} showed that the Fisher linear discriminant function performs poorly due to diverging spectra in the case of large-dimensional data and small sample sizes. \cite{bickel2004some}, \cite{srivastava2007comparison} investigated the asymptotic properties of the linear discriminant function in high dimension, while modifications of the linear discriminant function can be found in \cite{cai2011direct}, \cite{shao2011sparse}. The asymptotic results for the discriminant function coefficients in matrix-variate skew models can be found in \cite{BMP2016}.

We contribute to the statistical literature by deriving a stochastic representation of the discriminant function coefficient and the classification rule based on the linear discriminant function. These results provide us an efficient way of simulating these random quantities  and they are also used in the derivation of their high-dimensional asymptotic distributions, using which the error rate of the classification rule based on the linear discriminant function can be easily assessed and the problem of the increasing dimensionality can be visualized in a simple way.

The rest of the paper is organized as follows. The finite-sample properties of the discriminant function are presented in Section 2.1, where, in particular we derive a stochastic representation for the discriminant function coefficients. In Section 2.2, an exact one-sided test for the comparison of the population discriminant function coefficients is suggested, while a stochastic representation for the classification rule is obtained in Section 2.3. The finite-sample results are then use to derive the asymptotic distributions of the discriminant function coefficients and of the classification rule in Section 3, while finite sample performance of the asymptotic distribution is analysed in Section 3.2.

%-------------------------------------------------------------------
\section{Finite-sample properties of the discriminant function}
\labmarg{s2}
%-------------------------------------------------------------------
Let $\mathbf x _1^{(1)} ,  \dots  , \mathbf x _{n_{1}} ^ {(1)}$ and $\mathbf x_{1} ^ {(2)} , \dots , \mathbf x _ {n_{2}} ^ {(2)}$ be two independent samples from the multivariate normal distributions which consist of independent and identically distributed random vectors with $\mathbf x _i^{(1)}\sim \mathcal{N}_p(\bmu_1,\bSigma)$ for $i=1,...,n_1$ and $\mathbf x _j^{(2)}\sim \mathcal{N}_p(\bmu_2,\bSigma)$ for $j=1,...,n_2$ where $\bSigma$ is positive definite. Throughout the paper, $\mathbf 1_ n $ denotes the $n$-dimensional vector of ones, $\mathbf I _n$ is the $n\times n$ identity matrix, and the symbol $\otimes$ stands for the Kronecker product.

Let $\mathbf X ^{(1)} = \left(   \mathbf x _ 1 ^{(1)}  , \dots, \mathbf x _{n_{1}} ^  {(1)}  \right)$
and $\mathbf X ^{(2)} =\left(    \mathbf x _ 1 ^{(2)} ,\dots ,\mathbf x _{n_{2}} ^  {(2)}    \right)$ be observation matrices.
Then the sample estimators for the mean vectors and the covariance matrices constructed from each sample are given by
\begin{eqnarray*}
\bar{ \mathbf x } ^  {(j)}
&=&
\frac{1}{n_{j}}\sum\limits_{i=1}^{n_{j}}\mathbf x_i^{(j)}
=
\frac{1}{n_j}
\mathbf X ^{(j)}   \mathbf{1}_{n_j}
 \\
\mathbf S ^{(j)}
&=&
\frac{1}{n_{j}-1}
\sum_{i=1}^{n_j}
\left( \mathbf x _i^{(j)} - \bar{\mathbf x }^{(j)}\right)
\left( \mathbf x_i^{(j)} - \bar{\mathbf x }^{(j)}\right)^T\,.
%= \frac{1}{n_{j}-1} \mathbf X ^{(j)} \mathbf V ^{(j)}  \mathbf X ^{(j)T}
\end{eqnarray*}
The pooled estimator for the covariance matrix, i.e., an estimator for $\bSigma$ obtained from two samples, is then given by
\begin{eqnarray}
\mathbf S_{pl}= \frac{1}{n_{1}+n_{2}-2} \left[ (n_{1}-1)\mathbf S^{(1)}+(n_{2}-1)\mathbf S^{(2)}\right]
\end{eqnarray}

The following lemma (see, e.g., \cite[Section 5.4.2]{RC2012}) presents the joint distribution of $\bar{\mathbf x}^{(1)}$, $\bar{\mathbf x}^{(2)}$ and  $\mathbf S_{pl}$.

%--------------------------------------------------------------
\begin{lemma}
\label{l1}
Let $\mathbf X_1 \sim \mathcal{N}_{p, n_1}  \left(\bmu_1 \mathbf 1_{n_1}^T , \bSigma \otimes \mathbf I_{n_1} \right)$ and $\mathbf X_2 \sim \mathcal{N}_{p, n_2}  \left(\bmu_2 \mathbf 1_{n_2}^T , \bSigma \otimes \mathbf I_{n_2} \right)$ for $ p < n_1 + n_2 - 2$. Assume that $\mathbf X_1$ and $\mathbf X_2 $ are independent. Then
\begin{enumerate}
\item[(a)]
$\bar{\mathbf x }^{(1)}
\sim
\mathcal{N}_p
\left(
\bmu_1,
\frac{1}{n_1}
\bSigma
\right)$,
\item[(b)]
$\bar{\mathbf x }^{(2)}
\sim
\mathcal{N}_p
\left(
\bmu_2,
\frac{1}{n_2}
\bSigma
\right)$,

\item[(c)]
$ (n_{1}+n_{2}-2)\mathbf S_{pl}
\sim
\mathcal W_p(n_{1}+n_{2}-2, \bSigma)$,
\end{enumerate}
Moreover, $\bar{\mathbf x }^{(1)}$, $\bar{\mathbf x }^{(2)}$ and $\mathbf S_{pl}$ are mutually independently distributed.
\end{lemma}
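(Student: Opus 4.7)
The plan is to assemble the three distributional statements from classical facts about the multivariate normal and Wishart distributions, applied separately within each of the two independent samples, and then combined via the independence of $\mathbf X_1$ and $\mathbf X_2$.

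For parts (a) and (b), I would write $\bar{\mathbf x}^{(j)} = \frac{1}{n_j}\mathbf X^{(j)} \mathbf 1_{n_j}$ and invoke the linear-transformation rule for the matrix-variate normal: since $\mathbf X_j \sim \mathcal{N}_{p, n_j}(\bmu_j \mathbf 1_{n_j}^T, \bSigma \otimes \mathbf I_{n_j})$, right-multiplying by $\mathbf 1_{n_j}/n_j$ gives $\bar{\mathbf x}^{(j)} \sim \mathcal{N}_p(\bmu_j, \bSigma / n_j)$, since $\mathbf 1_{n_j}^T \mathbf I_{n_j} \mathbf 1_{n_j}/n_j^2 = 1/n_j$. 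Equivalently, $\bar{\mathbf x}^{(j)}$ is the arithmetic mean of $n_j$ i.i.d. $\mathcal{N}_p(\bmu_j, \bSigma)$ vectors.

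For part (c), I would first establish the classical within-sample result $(n_j-1)\mathbf S^{(j)} \sim \mathcal{W}_p(n_j - 1, \bSigma)$ with $\mathbf S^{(j)}$ independent of $\bar{\mathbf x}^{(j)}$. The standard argument is to complete $\mathbf 1_{n_j}/\sqrt{n_j}$ to an $n_j \times n_j$ orthogonal matrix $\mathbf H_j$; then, because $\mathbf H_j$ is orthogonal, the columns of $\mathbf X^{(j)} \mathbf H_j^T$ are jointly independent normal vectors. The first column equals $\sqrt{n_j}\,\bar{\mathbf x}^{(j)}$, while the remaining $n_j - 1$ columns are i.i.d. $\mathcal{N}_p(\mathbf 0, \bSigma)$, and the sum of their outer products is exactly $(n_j - 1)\mathbf S^{(j)}$, which is the defining representation of a Wishart. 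Since the two samples are independent and share the common scale matrix $\bSigma$, Wishart additivity yields
\begin{equation*}
(n_1 + n_2 - 2)\mathbf S_{pl} = (n_1 - 1)\mathbf S^{(1)} + (n_2 - 1)\mathbf S^{(2)} \sim \mathcal{W}_p(n_1 + n_2 - 2, \bSigma),
\end{equation*}
and the hypothesis $p < n_1 + n_2 - 2$ ensures that this Wishart is nonsingular.

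Joint independence of the triple then follows from two ingredients: within each sample, $\bar{\mathbf x}^{(j)}$ and $\mathbf S^{(j)}$ are independent by the orthogonal decomposition just used; between samples, $(\bar{\mathbf x}^{(1)}, \mathbf S^{(1)})$ and $(\bar{\mathbf x}^{(2)}, \mathbf S^{(2)})$ are independent because $\mathbf X_1$ and $\mathbf X_2$ are. Since $\mathbf S_{pl}$ is a deterministic function of $(\mathbf S^{(1)}, \mathbf S^{(2)})$, the three quantities $\bar{\mathbf x}^{(1)}$, $\bar{\mathbf x}^{(2)}$, and $\mathbf S_{pl}$ are mutually independent. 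There is no serious obstacle here; the proof is essentially bookkeeping of classical facts. The only point requiring mild care is that Wishart additivity is being applied to two components that genuinely are independent and share the same scale matrix $\bSigma$, both of which are guaranteed by the hypotheses.
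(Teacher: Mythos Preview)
Your proof is correct and is exactly the standard textbook argument. The paper does not supply its own proof of this lemma at all; it simply cites \cite[Section 5.4.2]{RC2012}, so your self-contained derivation via the orthogonal (Helmert-type) decomposition within each sample and Wishart additivity across samples is in fact more detailed than what the paper provides.
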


The results of Lemma \ref{l1}, in particular, implies that
\begin{eqnarray}
\bar{\mathbf x } ^{(1)} - \bar{\mathbf x } ^{(2)}\sim \mathcal N_p\left( \bmu_1 -\bmu_2,\left(\frac{1}{n_1} + \frac{1}{n_2}\right) \bSigma\right)
\end{eqnarray}
which is independent of $\mathbf S_{pl}$.

%%%%%%%%%%%%%%%%%%%%%%%%%%%%%%%%%%%%%%
%   Section DF
%%%%%%%%%%%%%%%%%%%%%%%%%%%%%%%%%%%%%%

%\section{Linear discriminant function and its stochastic representations}\labmarg{s3}

%Formally we look for a linear combination of the variables, such that the standardized distance between the groups of observations is maximized.
%Let $\mathbf a=(a_{1}, a_{2},\ldots,a_{p}) \in \mathbb R^{p}$ be a vector of constants
%and $z_ i ^ {(1)} = \mathbf a ^ T \mathbf x _ i ^{(1)} \ ( i = \overline{ 1, n_1} )$
% and similarly  $z_ i^{(2)} = \mathbf a ^ T \mathbf x _ i ^{(2)}  \ ( i = \overline{ 1, n_2 } )$.
% The corresponding sample means are denoted by  $\overline z ^j = \mathbf a ^ T \overline{\mathbf x } ^{(j)}
% \ (j=\{1,2\})$, while the corresponding sample variance is denoted by $\sigma_z^2 = \mathbf a^T \mathbf S_{pl} \mathbf a$  .
%Since we maximize the standardized difference
%\begin{eqnarray}
%\frac{ \left(\overline z ^{(1)} - \overline z ^{(2)}\right)^2 }
%{\sigma_z^2}
%=\frac{\left[\mathbf a^{T}\left(\bar {\mathbf x}^{(1)} -\bar {\mathbf x}^{(2)}\right)\right]^{2}}{\mathbf a^{T}\mathbf S_{pl}\mathbf a}
%=\frac{\mathbf a^T \left(\bar{\mathbf x}^{(1)}-\bar{\mathbf x}^{(2)}\right) \left(\bar{\mathbf x}^{(1)}-\bar{\mathbf x}^{(2)}\right)^{T}\mathbf a}{\mathbf a^{T}\mathbf S_{pl}\mathbf a},
%\end{eqnarray}
% then the maximum is achieved at

\subsection{Stochastic representation for the discriminant function coefficients}
The discriminant function coefficients are given by the following vector
 \begin{eqnarray}
 \hat{\mathbf a} = \mathbf S ^{-1}_{pl}\left( \bar{\mathbf x }^{(1)} - \bar{\mathbf x }^{(2)}\right)
 \end{eqnarray}
which is the sample estimator of the population discriminant function coefficient vector expressed as
\begin{eqnarray*}
 \mathbf a = \bSigma ^{-1}\left( \bmu_1-\bmu_2\right)
\end{eqnarray*}

We consider a more general problem by deriving the distribution of linear combinations of the discriminant function coefficients. This result possesses several practical application: (i) it allows a direct comparison of the population coefficients in the discriminant function by deriving a corresponding statistical test; (ii) it can be used in the classification problem where providing a new observation vector one has to decide to which of two groups the observation vector has to be ordered.

Let $\bL$ be a $k \times p$ matrix of constants such that $rank (\mathbf L) = k < p$. We are then interested in
\begin{eqnarray}
\hat \btheta=\mathbf L \hat{\mathbf a}
=\mathbf L  \mathbf S ^{-1}_{pl}\left( \bar{\mathbf x }^{(1)} - \bar{\mathbf x }^{(2)}\right).
\end{eqnarray}
Choosing different matrices $\bL$ we are able to provide different inferences about the linear combinations of the discriminant function coefficients. For instance, if $k=1$ and $\bL$ is the vector with all elements zero except the one on the $j$th position which is one, then we get the distribution of the $j$th coefficient in the discriminant function. If we choose $k=1$ and $\bL=(1,-1,0,\ldots,0)^T$, then we analyse the difference between the first two coefficients in the discriminant function. The corresponding result can be further used to test if the population counterparts to these coefficients are zero or not. For $k>1$ several linear combinations of the discriminant function coefficients are considered simultaneously.

In the next theorem we derive a stochastic representation for $\hat \btheta$. The stochastic representation is a very important tool in analysing the distributional properties of random quantities. It is widely spread in the computation statistics (e.g., \cite{givens2012computational}), in the theory of elliptical distributions (see, \cite{Gupta2013}) as well as in Bayesian statistics (cf., \cite{BodnarMazurOkhrin2016}). Later on, we use the symbol ${\buildrel d \over =}$ to denote the equality in distribution.

%---------------------------------
%        Theorem 1
%---------------------------------

\begin{theorem} \labmarg{th1}
Let $\mathbf L $ be an arbitrary $k\times p$ matrix of constants such that $rank (\mathbf L) = k < p$. Then, under the assumption of Lemma \ref{l1} the stochastic representation of $\hat \btheta = \mathbf L \hat{ \mathbf a}$ is given by
\begin{eqnarray}
\hat \btheta
&
{\buildrel d \over =}
&
(n_1+n_2-2)
\xi^{-1}
\left(
\mathbf L \bSigma^{-1} \breve{\mathbf x}
+
\sqrt{\frac{\breve{\mathbf x}^T \bSigma^{-1} \breve{\mathbf x}}{n_1+n_2-p} }
\left( \mathbf L \mathbf R_{\breve{\mathbf x}} \mathbf L^T\right)^{1/2} \mathbf t_0
\right),
\end{eqnarray}
where
$\mathbf R_{\breve{ \mathbf x}} = \mathbf \Sigma^{-1} - \mathbf \Sigma^{-1} \breve{ \mathbf x} \breve{ \mathbf x}^{T} \mathbf \Sigma^{-1} / \breve{ \mathbf x}^{T} \mathbf \Sigma^{-1} \breve{ \mathbf x}$;
$\xi \sim \chi^2_{n_1+n_2-p-1}$, $\breve{\mathbf x} \sim \mathcal N_p \left(\bmu_1 - \bmu_2, \left( \frac{1}{n_1}+ \frac{1}{n_2}\right) \bSigma \right)$, and $\mathbf t_0 \sim t _k (n_1+n_2-p, \mathbf 0_k, \mathbf I_k)$.
Moreover, $\xi$, $\breve {\mathbf x}$ and $\mathbf t_0$ are mutually independent.

\end{theorem}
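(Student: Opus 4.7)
The plan is to condition on $\breve{\mathbf x}=\bar{\mathbf x}^{(1)}-\bar{\mathbf x}^{(2)}$, which by Lemma~\ref{l1} is independent of $\bS_{pl}$, and derive the distribution of $\bL\bS_{pl}^{-1}\mathbf v$ for an arbitrary fixed vector $\mathbf v$, with $\mathbf W:=(n_1+n_2-2)\bS_{pl}\sim\mathcal W_p(n_1+n_2-2,\bSigma)$. The stated stochastic representation will then follow on restoring the random $\breve{\mathbf x}$ and extracting the prefactor $(n_1+n_2-2)$ from $\bS_{pl}^{-1}=(n_1+n_2-2)\mathbf W^{-1}$.

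First I would standardise by setting $\mathbf A:=\bSigma^{-1/2}\mathbf W\bSigma^{-1/2}\sim\mathcal W_p(n_1+n_2-2,\mathbf I_p)$, so that $\bL\mathbf W^{-1}\mathbf v=\tilde\bL\mathbf A^{-1}\tilde{\mathbf v}$ with $\tilde\bL=\bL\bSigma^{-1/2}$ and $\tilde{\mathbf v}=\bSigma^{-1/2}\mathbf v$. By the orthogonal invariance of $\mathcal W_p(\cdot,\mathbf I)$, picking any orthogonal $\mathbf H$ with $\mathbf H\tilde{\mathbf v}=c\,\mathbf e_1$, where $c:=\sqrt{\mathbf v^T\bSigma^{-1}\mathbf v}$, yields $\tilde{\mathbf A}:=\mathbf H\mathbf A\mathbf H^T\sim\mathcal W_p(n_1+n_2-2,\mathbf I_p)$; writing $\mathbf M:=\tilde\bL\mathbf H^T=(\mathbf m_1,\mathbf M_2)$ with $\mathbf m_1$ the first column, one has $\bL\mathbf W^{-1}\mathbf v=c\,\mathbf M\tilde{\mathbf A}^{-1}\mathbf e_1$. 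A short direct check gives $c\mathbf m_1=\bL\bSigma^{-1}\mathbf v$ and $\mathbf M_2\mathbf M_2^T=\mathbf M\mathbf M^T-\mathbf m_1\mathbf m_1^T=\bL\mathbf R_{\mathbf v}\bL^T$, already producing the two pieces that appear in the target representation.

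Next, with $\tilde{\mathbf A}$ partitioned into upper-left scalar $a_{11}$, upper-right row $\mathbf a_{12}^T$, and lower-right $(p-1)\times(p-1)$ block $\mathbf A_{22}$, I would invoke the classical Wishart partition facts (which one derives at once from a data representation $\tilde{\mathbf A}=\mathbf X\mathbf X^T$ and the orthogonal decomposition of the first row of $\mathbf X$ along the row space of $\mathbf X_2$): $\xi:=a_{11}-\mathbf a_{12}^T\mathbf A_{22}^{-1}\mathbf a_{12}\sim\chi^2_{n_1+n_2-p-1}$ is independent of $(\mathbf A_{22},\mathbf a_{12})$; $\mathbf A_{22}\sim\mathcal W_{p-1}(n_1+n_2-2,\mathbf I_{p-1})$; and $\mathbf a_{12}\mid\mathbf A_{22}\sim\mathcal N_{p-1}(\mathbf 0,\mathbf A_{22})$. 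The block-inverse formula then gives $\tilde{\mathbf A}^{-1}\mathbf e_1=\xi^{-1}\bigl(1,-(\mathbf A_{22}^{-1}\mathbf a_{12})^T\bigr)^T$, and multiplying through by $(n_1+n_2-2)c$ yields
\[
\bL\bS_{pl}^{-1}\mathbf v=(n_1+n_2-2)\,\xi^{-1}\bigl(\bL\bSigma^{-1}\mathbf v-c\,\mathbf M_2\mathbf A_{22}^{-1}\mathbf a_{12}\bigr).
\]

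The only non-routine step, and the main obstacle, is to identify $-c\,\mathbf M_2\mathbf A_{22}^{-1}\mathbf a_{12}$ with the multivariate $t$ noise term in the statement. Writing $\mathbf u:=\mathbf A_{22}^{-1}\mathbf a_{12}$, so that $\mathbf u\mid\mathbf A_{22}\sim\mathcal N_{p-1}(\mathbf 0,\mathbf A_{22}^{-1})$, my plan is to marginalise this Gaussian density against the Wishart density of $\mathbf A_{22}$: the $\mathbf A_{22}$-integral is a Wishart normalising constant, and the determinant identity $|\mathbf I_{p-1}+\mathbf u\mathbf u^T|=1+\mathbf u^T\mathbf u$ collapses it to the kernel $(1+\mathbf u^T\mathbf u)^{-(n_1+n_2-1)/2}$, which is precisely that of $t_{p-1}(n_1+n_2-p,\mathbf 0,\mathbf I_{p-1}/(n_1+n_2-p))$. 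Closure of multivariate $t$ under linear maps then delivers $\mathbf M_2\mathbf u\buildrel d\over =(\bL\mathbf R_{\mathbf v}\bL^T)^{1/2}\mathbf t_0/\sqrt{n_1+n_2-p}$ with $\mathbf t_0\sim t_k(n_1+n_2-p,\mathbf 0_k,\mathbf I_k)$, the sign being immaterial by symmetry of $\mathbf t_0$. Independence of $\xi$ from $\mathbf t_0$ is automatic because $\xi\perp(\mathbf A_{22},\mathbf a_{12})$, and independence from $\breve{\mathbf x}$ follows on unconditioning; assembling all pieces produces the claimed representation.
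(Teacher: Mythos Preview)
Your argument is correct and essentially self-contained, but it is a genuinely different route from the paper's. The paper conditions on $\breve{\mathbf x}=\breve{\mathbf x}^*$ as you do, but then factors
\[
\btheta^*=(n_1+n_2-2)\,\breve{\mathbf x}^{*T}\bSigma^{-1}\breve{\mathbf x}^*\cdot
\frac{\mathbf L\mathbf S_{pl}^{-1}\breve{\mathbf x}^*}{\breve{\mathbf x}^{*T}\mathbf S_{pl}^{-1}\breve{\mathbf x}^*}\cdot
\frac{\breve{\mathbf x}^{*T}\mathbf S_{pl}^{-1}\breve{\mathbf x}^*}{(n_1+n_2-2)\,\breve{\mathbf x}^{*T}\bSigma^{-1}\breve{\mathbf x}^*},
\]
and then invokes three external results: Theorem~3.2.12 of Muirhead for the $\chi^2_{n_1+n_2-p-1}$ law of the last ratio, Theorem~3 of Bodnar and Okhrin (2008) for the independence of $\breve{\mathbf x}^{*T}\mathbf S_{pl}^{-1}\breve{\mathbf x}^*$ and $\mathbf L\mathbf S_{pl}^{-1}\breve{\mathbf x}^*/\breve{\mathbf x}^{*T}\mathbf S_{pl}^{-1}\breve{\mathbf x}^*$, and Theorem~1 of Bodnar and Schmid (2008) for the multivariate~$t$ law of the normalised ratio. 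Your approach instead rotates to align $\bSigma^{-1/2}\mathbf v$ with $\mathbf e_1$, reads off $\xi=a_{11\cdot2}$ and $\mathbf u=\mathbf A_{22}^{-1}\mathbf a_{12}$ from the block-inverse formula, and obtains the $t$ law by integrating the Wishart density of $\mathbf A_{22}$ against the conditional Gaussian of $\mathbf u$. The two arguments are equivalent in substance (your $\xi$ and $\mathbf M_2\mathbf u$ are exactly the quantities whose laws the cited theorems describe), but yours is more elementary in that it re-derives those facts from the Bartlett/partition structure of $\mathcal W_p(\cdot,\mathbf I)$ rather than quoting them, at the cost of carrying out the Wishart integral explicitly. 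The paper's version has the advantage of making the independence of $\xi$ from the $t$-piece transparent as a known property of inverse-Wishart ratios, whereas you obtain it from $a_{11\cdot2}\perp(\mathbf A_{22},\mathbf a_{12})$.
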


%---------------------------------
%        Proof
%---------------------------------
\begin{proof}
From Lemma \ref{l1}.(c) and Theorem 3.4.1 of \citet{GN2000} we obtain that
\begin{eqnarray}
\frac{1}{n_1 +n_2 -2} \mathbf S_{pl}^{-1}
\sim
\mathcal{IW}_p (n_1+n_2+p-1, \bSigma^{-1}).
\end{eqnarray}
Also, since $\breve {\mathbf x} =\bar{\mathbf x }^{(1)} - \bar{\mathbf x }^{(2)}$ and $\mathbf S_{pl}$ are independent,
 the conditional distribution of\\ $\hat \btheta = \mathbf L \mathbf S^{-1}_{pl} \breve {\mathbf x}$ given $\breve {\mathbf x} = \breve{ \mathbf x}^* $
equals to the distribution of $\btheta^*=\mathbf L \mathbf S^{-1}_{pl} \breve{ \mathbf x}^*$
and it can be rewritten in the following form
\begin{eqnarray*}
\btheta^*
&
{\buildrel d \over =}
&
(n_1+n_2 - 2) \breve{ \mathbf x}^{*T} \mathbf \Sigma^{-1} \breve{ \mathbf x}^*
\frac{\mathbf L \mathbf S^{-1}_{pl} \breve{ \mathbf x}^*} {\breve{ \mathbf x}^{*T} \mathbf S^{-1}_{pl} \breve{ \mathbf x}^*}
\frac{\breve{ \mathbf x}^{*T} \mathbf S^{-1}_{pl} \breve{ \mathbf x}^*}{(n_1+n_2-2) \breve{ \mathbf x}^{*T} \mathbf \Sigma^{-1} \breve{ \mathbf x}^*}.
\end{eqnarray*}

Applying Theorem 3.2.12 of \citet{M1982} we obtain that
\begin{eqnarray}
\labmarg{pth2eq1}
\xi^* = (n_1 + n_2 - 2) \frac{\breve{ \mathbf x}^{*T} \mathbf \Sigma^{-1} \breve{ \mathbf x}^*}{\breve{ \mathbf x}^{*T} \mathbf S^{-1}_{pl} \breve{ \mathbf x}^*}
\sim
\chi^2_{n_1+n_2 - p - 1}
\end{eqnarray}
and its distribution is independent of $ \breve{ \mathbf x}^*$. Hence,
\begin{eqnarray}
\labmarg{pth2eq1a}
\xi = (n_1 + n_2 - 2) \frac{\breve{ \mathbf x}^{T} \mathbf \Sigma^{-1} \breve{ \mathbf x}}{\breve{ \mathbf x}^{T} \mathbf S^{-1}_{pl} \breve{ \mathbf x}}
\sim
\chi^2_{n_1+n_2 - p - 1}
\end{eqnarray}
and $\xi$, $\breve{ \mathbf x}$ are independent.

Using Theorem 3 of \citet{BO2008} we get that $ \breve{ \mathbf x}^{*T} \mathbf S^{-1}_{pl}  \breve{ \mathbf x}^* $ is independent of $\mathbf L \mathbf S^{-1}_{pl}  \breve{ \mathbf x}^* /  \breve{ \mathbf x}^{*T} \mathbf S^{-1}_{pl}  \breve{ \mathbf x}^*$ for given $ \breve{ \mathbf x}^*$.
Therefore, $\xi^*$ is  independent of $ \breve{ \mathbf x}^{*T} \mathbf \Sigma^{-1}  \breve{ \mathbf x}^* \cdot \mathbf L \mathbf S^{-1}_{pl}  \breve{ \mathbf x}^* /  \breve{ \mathbf x}^{*T} \mathbf S^{-1} _{pl} \breve{ \mathbf x}^*$
and, respectively, $\xi$ is independent of
$ \breve{ \mathbf x}^T \mathbf \Sigma^{-1}  \breve{ \mathbf x} \cdot \mathbf L \mathbf S^{-1}_{pl}  \breve{ \mathbf x} /  \breve{ \mathbf x}^{T} \mathbf S^{-1} _{pl} \breve{ \mathbf x}$.
Furthermore, from the proof of Theorem 1 of \citet{BS2008} it holds that
\begin{eqnarray}
\labmarg{pth2eq2}
  \breve{ \mathbf x}^{*T} \mathbf \Sigma^{-1} \breve{ \mathbf x}^*
\frac{\mathbf L \mathbf S^{-1}_{pl} \breve{ \mathbf x}^*} {\breve{ \mathbf x}^{*T} \mathbf S^{-1}_{pl} \breve{ \mathbf x}^*}
\sim
t _k
\left(
n_1+n_2 - p ; \mathbf L \mathbf \Sigma^{-1} \breve{ \mathbf x}^*,
\frac{\breve{ \mathbf x}^{*T} \mathbf \Sigma^{-1} \breve{ \mathbf x}^*}{n_1+n_2-p}
\mathbf L \mathbf R_{\breve{ \mathbf x}^*}  \mathbf L^T
\right)
\end{eqnarray}
with
$\mathbf R_{\breve{ \mathbf x}^*} = \mathbf \Sigma^{-1} - \mathbf \Sigma^{-1} \breve{ \mathbf x}^* \breve{ \mathbf x}^{*T} \mathbf \Sigma^{-1} / \breve{ \mathbf x}^{*T} \mathbf \Sigma^{-1} \breve{ \mathbf x}^*$.

Thus, we obtain the following stochastic representation of $\hat \btheta$ which is given by
\begin{eqnarray}
\hat \btheta
&
{\buildrel d \over =}
&
(n_1+n_2-2)
\xi^{-1}
\left(
\mathbf L \bSigma^{-1} \breve{\mathbf x}
+
\sqrt{\frac{\breve{\mathbf x}^T \bSigma^{-1} \breve{\mathbf x}}{n_1+n_2-p} }
\left( \mathbf L \mathbf R_{\breve{\mathbf x}} \mathbf L^T\right)^{1/2} \mathbf t_0
\right),
\end{eqnarray}
where
$\mathbf R_{\breve{ \mathbf x}} = \mathbf \Sigma^{-1} - \mathbf \Sigma^{-1} \breve{ \mathbf x} \breve{ \mathbf x}^{T} \mathbf \Sigma^{-1} / \breve{ \mathbf x}^{T} \mathbf \Sigma^{-1} \breve{ \mathbf x}$;
$\xi \sim \chi^2_{n_1+n_2-p-1}$, $\breve{\mathbf x} \sim \mathcal N_p \left(\bmu_1 - \bmu_2, \left( \frac{1}{n_1}+ \frac{1}{n_2}\right) \bSigma \right)$, and $\mathbf t_0 \sim t _k (n_1+n_2-p, \mathbf 0_k, \mathbf I_k)$.
Moreover, $\xi$, $\breve {\mathbf x}$ and $\mathbf t_0$ are mutually independent.
The theorem is proved.
\end{proof}

In the next corollary we consider the special case when $k = 1$, that is, when $\mathbf L = \mathbf l^T$ is a $p$-dimensional vector of constants.

%---------------------------------
%        Corollary 1
%---------------------------------
\begin{corollary}
\labmarg{c1}
Let $\lambda = 1/n_{1} + 1/n_{2}$ and let $\mathbf l$ be a $p$-dimensional vector of constants.
Then,  under the condition of Theorem \ref{th1}, the stochastic representation of $\hat \theta = \mathbf l^T \hat{\mathbf a}$ is given by
\begin{eqnarray} \label{stoch_pres_theta}
\hat \theta
&{\buildrel d \over =}
&
(n_1+n_2-2)
\xi^{-1}
\left( \mathbf l^{T}\mathbf\Sigma^{-1}(\boldsymbol{\mu}_{1}-\boldsymbol{\mu}_{2})+\sqrt{\left(\lambda+\frac{\lambda (p-1)}{n_{1}+n_{2}-p}u\right)\mathbf l^{T}\mathbf\Sigma^{-1}\mathbf  l } z_0 \right),
\end{eqnarray}
where
$\xi \sim \chi^2_{n_1+n_2-p-1}$,
$z_0 \sim \mathcal N (0, 1)$,
$u \sim \mathcal F \left(p-1, n_{1}+n_{2}-p, (\boldsymbol{\mu}_{1}-\boldsymbol{\mu}_{2})^{T}\mathbf R_{\mathbf l}(\boldsymbol{\mu}_{1}-\boldsymbol{\mu}_{2})/ \lambda \right)$
(non-central
$\mathcal F$-distribution with $p-1$ and $n_1 + n_2 - p$ degrees of freedom and non-centrality parameter $(\boldsymbol{\mu}_{1}-\boldsymbol{\mu}_{2})^{T}\mathbf R_{\mathbf l}(\boldsymbol{\mu}_{1}-\boldsymbol{\mu}_{2})/ \lambda$)
with
$\mathbf R_{\mathbf l}=\mathbf\Sigma^{-1}-\mathbf \Sigma^{-1}\mathbf l \mathbf l^{T}\mathbf \Sigma^{-1}/ \mathbf l^{T}\mathbf\Sigma^{-1}\mathbf l$;
$\xi$, $z_0$ and $u$ are mutually independently distributed.
\end{corollary}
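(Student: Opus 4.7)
The plan is to start from the stochastic representation of Theorem~\ref{th1} with $k=1$ and $\bL=\mathbf{l}^T$ and rewrite its bracket in terms of one standard normal, one chi-squared, and one non-central $F$ variable. Two pieces need to be transformed: the scale factor $\sqrt{\breve{\mathbf{x}}^T\bSigma^{-1}\breve{\mathbf{x}}/(n_1+n_2-p)}\sqrt{\mathbf{l}^T\mathbf{R}_{\breve{\mathbf{x}}}\mathbf{l}}\,t_0$, which still depends on the random matrix $\mathbf{R}_{\breve{\mathbf{x}}}$, and the linear form $\mathbf{l}^T\bSigma^{-1}\breve{\mathbf{x}}$, which carries a Gaussian fluctuation around $\mathbf{l}^T\bSigma^{-1}(\bmu_1-\bmu_2)$ that must be merged with the noise coming from $t_0$.

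The first ingredient is the purely algebraic identity
\[
\breve{\mathbf{x}}^T\bSigma^{-1}\breve{\mathbf{x}}\cdot\mathbf{l}^T\mathbf{R}_{\breve{\mathbf{x}}}\mathbf{l}=\mathbf{l}^T\bSigma^{-1}\mathbf{l}\cdot\breve{\mathbf{x}}^T\mathbf{R}_{\mathbf{l}}\breve{\mathbf{x}},
\]
which follows by direct expansion using $\bSigma^{-1}=\mathbf{R}_{\mathbf{l}}+\bSigma^{-1}\mathbf{l}\mathbf{l}^T\bSigma^{-1}/(\mathbf{l}^T\bSigma^{-1}\mathbf{l})$. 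This trades the random $\mathbf{R}_{\breve{\mathbf{x}}}$ for the deterministic $\mathbf{R}_{\mathbf{l}}$ at the cost of introducing the quadratic form $\breve{\mathbf{x}}^T\mathbf{R}_{\mathbf{l}}\breve{\mathbf{x}}$. After the affine change of variables $\mathbf{y}=\bSigma^{-1/2}\breve{\mathbf{x}}/\sqrt{\lambda}\sim\mathcal N_p(\bSigma^{-1/2}(\bmu_1-\bmu_2)/\sqrt{\lambda},\mathbf{I}_p)$, the matrix $\bSigma^{1/2}\mathbf{R}_{\mathbf{l}}\bSigma^{1/2}$ becomes the orthogonal projector onto the $(p-1)$-dimensional complement of $\bSigma^{-1/2}\mathbf{l}$, so the standard quadratic-form theorem yields $Y^{*}:=\breve{\mathbf{x}}^T\mathbf{R}_{\mathbf{l}}\breve{\mathbf{x}}/\lambda\sim\chi^2_{p-1}(\delta)$ with $\delta=(\bmu_1-\bmu_2)^T\mathbf{R}_{\mathbf{l}}(\bmu_1-\bmu_2)/\lambda$, independent of the linear form $\mathbf{l}^T\bSigma^{-1}\breve{\mathbf{x}}$ (the two being functions of orthogonal components of $\mathbf{y}$).

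Next I would represent $t_0=z_0\sqrt{(n_1+n_2-p)/V}$ with $z_0\sim\mathcal N(0,1)$ and $V\sim\chi^2_{n_1+n_2-p}$ independent of $\breve{\mathbf{x}}$ and of $\xi$, and split the linear form as $\mathbf{l}^T\bSigma^{-1}\breve{\mathbf{x}}=\mathbf{l}^T\bSigma^{-1}(\bmu_1-\bmu_2)+\sqrt{\lambda\mathbf{l}^T\bSigma^{-1}\mathbf{l}}\,z_1$ with $z_1\sim\mathcal N(0,1)$ independent of $Y^{*}$ by the projection argument above. Substituting everything into the bracket of Theorem~\ref{th1} and factoring out $\sqrt{\lambda\mathbf{l}^T\bSigma^{-1}\mathbf{l}}$ reduces the noise term to $z_1+z_0\sqrt{Y^{*}/V}$.

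Finally, since $z_1,z_0$ are independent standard normals and both are independent of $(Y^{*},V)$, conditioning on $(Y^{*},V)$ gives $z_1+z_0\sqrt{Y^{*}/V}\stackrel{d}{=}\tilde z_0\sqrt{1+Y^{*}/V}$ for a fresh $\tilde z_0\sim\mathcal N(0,1)$ independent of $(Y^{*},V)$. Writing $u=[Y^{*}/(p-1)]/[V/(n_1+n_2-p)]$ identifies $u\sim\mathcal F(p-1,n_1+n_2-p,\delta)$ and $1+Y^{*}/V=1+(p-1)u/(n_1+n_2-p)$, matching the square-root factor in~\eqref{stoch_pres_theta}. The main obstacle is this final combination: the extra $+\lambda$ inside the root — absent from a mechanical substitution into Theorem~\ref{th1} — is produced entirely by the absorbed Gaussian $z_1$ from $\mathbf{l}^T\bSigma^{-1}\breve{\mathbf{x}}$, so one has to verify carefully that the five random objects $\xi,z_0,V,z_1,Y^{*}$ are jointly independent for the conditioning argument to go through.
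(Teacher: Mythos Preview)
Your proposal is correct and follows essentially the same route as the paper's own proof: the same algebraic swap $\breve{\mathbf{x}}^T\bSigma^{-1}\breve{\mathbf{x}}\cdot\mathbf{l}^T\mathbf{R}_{\breve{\mathbf{x}}}\mathbf{l}=\mathbf{l}^T\bSigma^{-1}\mathbf{l}\cdot\breve{\mathbf{x}}^T\mathbf{R}_{\mathbf{l}}\breve{\mathbf{x}}$, the same identification of $\breve{\mathbf{x}}^T\mathbf{R}_{\mathbf{l}}\breve{\mathbf{x}}/\lambda$ as a non-central $\chi^2_{p-1}$ independent of $\mathbf{l}^T\bSigma^{-1}\breve{\mathbf{x}}$, the same $t_0=z_0\sqrt{(n_1+n_2-p)/w}$ decomposition, and the same conditional-Gaussian merge that produces the extra $+\lambda$ inside the root. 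The only cosmetic difference is that the paper invokes Corollary~5.1.3a and Theorem~5.5.1 of Mathai--Provost directly, whereas you reach the same conclusions via the explicit standardization $\mathbf{y}=\bSigma^{-1/2}\breve{\mathbf{x}}/\sqrt{\lambda}$ and the projector picture; your concern about the joint independence of $(\xi,z_0,V,z_1,Y^{*})$ is legitimate and is exactly what the paper handles implicitly by noting that $(z_0,w)$ comes from $t_0$, $(z_1,Y^{*})$ from $\breve{\mathbf{x}}$, and Theorem~\ref{th1} already gives mutual independence of $\xi$, $t_0$, $\breve{\mathbf{x}}$.
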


%---------------------------------
%        Proof
%---------------------------------
\begin{proof}
From Theorem \ref{th1} we get that
\begin{eqnarray}
\hat \theta
&
{\buildrel d \over =}
&
(n_1+n_2-2)
\xi^{-1}
\left(
\mathbf l^T \bSigma^{-1} \breve{\mathbf x}
+
 t_0
\sqrt{\frac{\breve{\mathbf x}^T \bSigma^{-1} \breve{\mathbf x}}{n_1+n_2-p} \cdot  \mathbf l^T \mathbf R_{\breve{\mathbf x}} \mathbf l}
\right)
\\
&
=
&
(n_1+n_2-2)
\xi^{-1}
\left(
\mathbf l^T \bSigma^{-1} \breve{\mathbf x}
+
\frac{t_0}{\sqrt{n_1+n_2-p}}
\sqrt{\mathbf l^T \bSigma^{-1}  \mathbf l }
\sqrt{\breve{\mathbf x}^T \mathbf R_{\mathbf l} \breve{\mathbf x} }
\right),
\end{eqnarray}
where
$\mathbf R_{\mathbf l} = \mathbf \Sigma^{-1} - \mathbf \Sigma^{-1}  \mathbf l \mathbf l^{T} \mathbf \Sigma^{-1} /  \mathbf l^{T} \mathbf \Sigma^{-1}  \mathbf l$;
$\xi \sim \chi^2_{n_1+n_2-p-1}$, $ t_0 \sim t  (n_1+n_2-p, 0, 1)$, and $\breve{\mathbf x} \sim \mathcal N_p \left(\bmu_1 - \bmu_2, \lambda \bSigma \right)$ with $ \lambda =1/n_1+ 1/n_2$;
$\xi$, $ t_0$ and $\breve {\mathbf x}$ are mutually independent.

Because $\breve{\mathbf x} \sim \mathcal N_p \left(\bmu_1 - \bmu_2, \lambda \bSigma \right)$, $\mathbf R_{\mathbf l} \bSigma \mathbf R_{\mathbf l} = \mathbf R_{\mathbf l}$, and $\mathrm {tr} \left[\mathbf R_{\mathbf l} \bSigma \right] = p-1$, the application of Corollary 5.1.3a of \citet{MP1992} leads to
\begin{eqnarray}
\zeta=\lambda^{-1}\breve{\mathbf x}^T \mathbf R_{\mathbf l} \breve{\mathbf x}
\sim \chi^2_{p-1} \left(  \delta^2 \right)
\end{eqnarray}
where $\delta^2 = (\bmu_1 - \bmu_2)^T \mathbf R_{\mathbf l} (\bmu_1-\bmu_2)/ \lambda$.
Moreover, since $\mathbf R_{\mathbf l} \bSigma \bSigma^{-1} \mathbf l= \mathbf 0$, the application of Theorem 5.5.1 of \citet{MP1992} proves that $\mathbf l^T \bSigma^{-1} \breve{\mathbf x}$ and $\zeta$
are independently distributed.

Finally, we note that the random variable $t_0 \sim t (n_1+n_2-p, 0,1) $ has the following
stochastic representation
\begin{eqnarray}
t_0
{\buildrel d \over =}
z_0 \sqrt{\frac{n_1+n_2-p}{ w}},
\end{eqnarray}
where $z_0 \sim \mathcal N (0,1)$ and $w \sim \chi^2_{n_1+n_2-p}$; $z_0$ and $w$ are independent.
Hence,
\begin{eqnarray}
\mathbf l^T\bSigma^{-1} \breve{\mathbf x} +
t_0
\sqrt{\frac{\lambda\zeta \cdot \mathbf l^T \bSigma^{-1} \mathbf l}{n_1+n_2-p} } \Bigg| \zeta,  w
&\sim&
\mathcal N \left( \mathbf l^T \bSigma^{-1} \bmu, \lambda \mathbf l^T \bSigma^{-1} \mathbf l \left( 1+\frac{\zeta}{w}\right) \right)
\\
&=&
\mathcal N
\left(
\mathbf l^T \bSigma^{-1} \bmu,
\lambda \mathbf l^T \bSigma^{-1} \mathbf l
\left(
1 + \frac{p-1}{n_1+n_2-p} u
\right)
\right),
\end{eqnarray}
where
\begin{eqnarray}
u
=
\frac{\zeta / (p-1)} { w/ (n_1+n_2-p)}
\sim
\mathcal F \left( p-1, n_1+n_2-p, (\bmu_1-\bmu_2)^T \mathbf R_{\mathbf l}(\bmu_1-\bmu_2 )/\lambda\right).
\end{eqnarray}
Putting all above together we get the statement of the corollary.
\end{proof}

\subsection{Test for the population discriminant function coefficients}\labmarg{s2_2}
One of the most important questions when the discriminant analysis is performed is to decide which coefficients are the most influential in the decision. Several methods exist in the literature with the following three approaches to be the most popular (c.f., \cite[Section 5.5]{RC2012}): (i) standardized coefficients; (ii) partial $F$-values; (iii) correlations between the variables and the discriminant function. \cite[Theorem 5.7A]{Rencher1998} argued that each of this three methods has several drawbacks. For instance, the correlations between the variables and the discriminant function do not show the multivariate contribution of each variable, but provide only univariate information how each variable separates the groups, ignoring the presence of other variables.

In this section, we propose an alternative approach based on the statistical hypothesis test. Namely, exact statistical tests will be derived on the null hypothesis that two population discriminant function coefficients are equal (two-sided test) as well as on the alternative hypothesis that a coefficient in the discriminant function is larger than another one (one-sided test). The testing hypothesis for the equality of the $i$-th and the $j$-th coefficients in the population discriminant function is given by
\begin{equation}\label{hyp_two-sided}
H_0:~ a_i=a_j \quad \text{against} \quad H_1:~ a_i \neq a_j\,,
\end{equation}
while in the case of one-sided test we check if
\begin{equation}\label{hyp_one-sided}
H_0:~ a_i\le a_j \quad \text{against} \quad H_1:~ a_i > a_j \,.
\end{equation}

In both cases the following test statistic is suggested
\begin{equation}\label{test_stat}
T=\sqrt{n_1+n_2-p-1}\frac{
\bl^T \mathbf S_{pl}^{-1}(\bar{\mathbf x}^{(1)}-\bar{\mathbf x}^{(2)})}{\sqrt{\bl^T\mathbf S_{pl}^{-1}\bl}
\sqrt{(n_1+n_2-2)(\frac{1}{n_1}+\frac{1}{n_2})+(\bar{\mathbf x}^{(1)}-\bar{\mathbf x}^{(2)})^T
\hat{\mathbf R}_{\bl}(\bar{\mathbf x}^{(1)}-\bar{\mathbf x}^{(2)})}}
\end{equation}
with
$$\hat{\mathbf R}_{\bl}=\mathbf S_{pl}^{-1}-\frac{\mathbf S_{pl}^{-1}\bl\bl^\top\mathbf S_{pl}^{-1}}{\bl^\top\mathbf S_{pl}^{-1}\bl} \quad\text{and} \quad \mathbf l=(0,..,0,\underbrace{1}_i,0...,0,\underbrace{-1}_j,0,...,0)^\top.$$
The distribution of $T$ follows from \cite[Theorem 6]{BOK2011} and it is summarized in Theorem \ref{th1_2}.

\begin{theorem} \label{th1_2}
Let $\lambda = 1/n_{1} + 1/n_{2}$ and let $\mathbf l$ be a $p$-dimensional vector of constants.
Then,  under the condition of Theorem \ref{th1},
\begin{enumerate}[(a)]
\item the density of $T$ is given by
\begin{equation}\label{den_TSR}
f_{T} (x)  =\frac{n_1+n_2-p}{\lambda(p-1)}\int_0^{\infty}
f_{t_{n_1+n_2-p-1,\delta_1(y)}}(x)f_{\mathcal{F}_{p-1,n_1+n_2-p,s/\lambda}}\left(\frac{n_1+n_2-p}{\lambda(p-1)}y\right)dy
\end{equation}
with $\delta_1(y)=\eta/\sqrt{\lambda+y}$, $\eta=\frac{\bl^T
\bSigma^{-1}(\bmu_1-\bmu_2)}{\sqrt{\bl^T\bSigma^{-1}\bl}}$, and $s=(\bmu_1-\bmu_2)^T \mathbf{R}_{\bl} (\bmu_1-\bmu_2)$; the symbol $f_{G}(.)$ denotes the density of the distribution $G$.

\item Under the null hypothesis it holds that $T \sim
t_{n_1+n_2-p-1}$ and  $T$ is independent of $(\bar{\mathbf x}^{(1)}-\bar{\mathbf x}^{(2)})^T
\hat{\mathbf R}_{\bl}(\bar{\mathbf x}^{(1)}-\bar{\mathbf x}^{(2)})$.
\end{enumerate}
\end{theorem}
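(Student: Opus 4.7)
The plan is to combine the stochastic representation of Corollary~\ref{c1} with a joint stochastic representation of $\bl^T\mathbf S_{pl}^{-1}\bl$ and $\breve{\mathbf x}^T\hat{\mathbf R}_\bl\breve{\mathbf x}$, and then to read part~(b) off from part~(a). The algebraic starting point is the identity
$$\breve{\mathbf x}^T\mathbf S_{pl}^{-1}\breve{\mathbf x} \;=\; \breve{\mathbf x}^T\hat{\mathbf R}_\bl \breve{\mathbf x} \;+\; \frac{(\bl^T \mathbf S_{pl}^{-1}\breve{\mathbf x})^2}{\bl^T \mathbf S_{pl}^{-1}\bl},$$
which is immediate from the definition of $\hat{\mathbf R}_\bl$ and expresses $T$ as a function of $\breve{\mathbf x}$ together with the triple $(\bl^T\mathbf S_{pl}^{-1}\breve{\mathbf x},\bl^T\mathbf S_{pl}^{-1}\bl,\breve{\mathbf x}^T\mathbf S_{pl}^{-1}\breve{\mathbf x})$.

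For part~(a) I would condition on $\breve{\mathbf x}=\breve{\mathbf x}^*$ and repeat the Wishart manipulations used in the proof of Theorem~\ref{th1}. The essential inputs are Theorem~3.2.12 of \citet{M1982} applied to $(n_1+n_2-2)\mathbf S_{pl}\sim\mathcal W_p(n_1+n_2-2,\bSigma)$, which gives that both $(n_1+n_2-2)\breve{\mathbf x}^{*T}\bSigma^{-1}\breve{\mathbf x}^*/\breve{\mathbf x}^{*T}\mathbf S_{pl}^{-1}\breve{\mathbf x}^*$ and $(n_1+n_2-2)\bl^T\bSigma^{-1}\bl/\bl^T\mathbf S_{pl}^{-1}\bl$ are $\chi^2_{n_1+n_2-p-1}$ (the former independent of $\breve{\mathbf x}^*$), together with Theorem~3 of \citet{BO2008}, which yields the independence of $\breve{\mathbf x}^{*T}\mathbf S_{pl}^{-1}\breve{\mathbf x}^*$ and the ratio $\bl^T\mathbf S_{pl}^{-1}\breve{\mathbf x}^*/\breve{\mathbf x}^{*T}\mathbf S_{pl}^{-1}\breve{\mathbf x}^*$. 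These ingredients produce a joint stochastic representation of the triple in terms of three mutually independent variates $\omega\sim\chi^2_{n_1+n_2-p}$, $z_0\sim\mathcal N(0,1)$ and $\zeta=\lambda^{-1}\breve{\mathbf x}^T\mathbf R_\bl\breve{\mathbf x}\sim\chi^2_{p-1}(s/\lambda)$ (the last being exactly the variable already identified in the proof of Corollary~\ref{c1}). Substituting into the rewritten expression for $T$, the $(\omega,\zeta)$ dependence collapses into the single scalar $u=[\zeta/(p-1)]/[\omega/(n_1+n_2-p)]\sim\mathcal F_{p-1,n_1+n_2-p,s/\lambda}$, and the conditional law $T\mid u\sim t_{n_1+n_2-p-1}(\delta_1(y))$ with $y=\lambda(p-1)u/(n_1+n_2-p)$ and $\delta_1(y)=\eta/\sqrt{\lambda+y}$ follows from the definition of the noncentral~$t$. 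Unconditioning against the $\mathcal F$ density and applying the change of variables $y=\lambda(p-1)u/(n_1+n_2-p)$, whose Jacobian is $(n_1+n_2-p)/[\lambda(p-1)]$, yields~\eqref{den_TSR}.

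Part~(b) is then essentially free. Under $H_0:a_i=a_j$ we have $\bl^T\bSigma^{-1}(\bmu_1-\bmu_2)=0$, whence $\eta=0$ and $\delta_1(y)\equiv 0$; the inner density in~\eqref{den_TSR} reduces to the central $t_{n_1+n_2-p-1}$ density, which is independent of $y$ and can be pulled outside the integral, leaving a density that integrates to one. Moreover, in the joint stochastic representation above, under $H_0$ the statistic $T$ reduces to a ratio $z_0/\sqrt{\chi^2_{n_1+n_2-p-1}/(n_1+n_2-p-1)}$ involving only independent variates, while $\breve{\mathbf x}^T\hat{\mathbf R}_\bl\breve{\mathbf x}$ is determined by $(\omega,\zeta)$, i.e.\ by different independent pieces of the joint representation; the asserted independence follows.

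The principal technical obstacle is obtaining the joint stochastic representation in a usable form, because the three quadratic and bilinear forms $\bl^T\mathbf S_{pl}^{-1}\breve{\mathbf x}^*$, $\bl^T\mathbf S_{pl}^{-1}\bl$ and $\breve{\mathbf x}^{*T}\mathbf S_{pl}^{-1}\breve{\mathbf x}^*$ all involve the same inverse Wishart matrix and their joint law does not factorize in any obvious way. This disentanglement — most cleanly effected by rotating $\bl$ onto $\mathbf e_1$ and using the Bartlett/Schur-complement decomposition of $\mathbf S_{pl}$ — is precisely the content of Theorem~6 of \citet{BOK2011}, which the statement itself invokes and which I would use to shortcut the derivation.
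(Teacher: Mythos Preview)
Your proposal is correct and aligns with the paper's treatment: the paper does not supply its own proof of Theorem~\ref{th1_2} but simply records it as a direct consequence of \cite[Theorem~6]{BOK2011}, and you arrive at the same citation while additionally sketching the mechanism behind it (conditioning on $\breve{\mathbf x}$, the inverse-Wishart identities of \citet{M1982} and \citet{BO2008}, collapse to a noncentral-$t$ mixture over the $\mathcal F$-distributed $u$, and the change of variables producing the Jacobian factor $(n_1+n_2-p)/[\lambda(p-1)]$). Your outline therefore subsumes the paper's approach.
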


Theorem \ref{th1_2} shows that the test statistics $T$ has a standard $t$-distribution under the null hypothesis. As a result, the suggested test will reject the null hypothesis of the two-sided test \eqref{hyp_two-sided} as soon as $|T|>t_{n_1+n_2-p-1;1-\alpha/2}$.

The situation is more complicated in the case of the one-sided test \eqref{hyp_one-sided}. In this case the maximal probability of the type I error has to be control. For that reason, we first calculate the probability of rejection of the null hypothesis for all possible parameter values and after that we calculate its maximum for the parameters which correspond to the null hypothesis in \eqref{hyp_one-sided}. Since the distribution of $T$ depends on $\bmu_1$, $\bmu_2$, and $\bSigma$ only over $\eta$ and $s$ (see, Theorem \ref{th1_2}), the task of finding the maximum is significantly simplified. Let $F_G(.)$ denotes the distribution function of the distribution $G$. For any constant $q$, we get
\begin{eqnarray*}
\mathbb{P}(T>q)&=& \int_{q}^{+\infty} f_T(x)dx\\
&=& \int_{q}^{+\infty}\frac{n_1+n_2-p}{\lambda(p-1)}\int_0^{\infty}
f_{t_{n_1+n_2-p-1,\delta_1(y)}}(x)f_{\mathcal{F}_{p-1,n_1+n_2-p,s/\lambda}}(\frac{n_1+n_2-p}{\lambda(p-1)}y)dy dx\\
&=&\frac{n_1+n_2-p}{\lambda(p-1)}\int_0^{\infty}f_{\mathcal{F}_{p-1,n_1+n_2-p,s/\lambda}}\left(\frac{n_1+n_2-p}{\lambda(p-1)}y\right) \int_{q}^{+\infty}
f_{t_{n_1+n_2-p-1,\delta_1(y)}}(x) dx dy\\
&=&\frac{n_1+n_2-p}{\lambda(p-1)} \int_0^{\infty} (1-F_{t_{n_1+n_2-p-1,\delta_1(y)}}(q)) f_{\mathcal{F}_{p-1,n_1+n_2-p,s/\lambda}}\left(\frac{n_1+n_2-p}{\lambda(p-1)}y\right) dy\\
&\le& \frac{n_1+n_2-p}{\lambda(p-1)} \int_0^{\infty} (1-F_{t_{n_1+n_2-p-1,0}}(q)) f_{\mathcal{F}_{p-1,n_1+n_2-p,s/\lambda}}\left(\frac{n_1+n_2-p}{\lambda(p-1)}y\right) dy\\
&=& (1-F_{t_{n_1+n_2-p-1,0}}(q)).
\end{eqnarray*}
where the last equality follows from the fact that the distribution function of the non-central $t$-distribution is a decreasing function in non-centrality parameter and $\delta_1(y)\le 0$. Consequently, we get $q=t_{n_1+n_2-p-1;1-\alpha}$ and the one-sided test rejects the null hypothesis in \eqref{hyp_one-sided} as soon as $T>t_{n_1+n_2-p-1;1-\alpha}$.

\subsection{Classification analysis}\labmarg{s2_3}

Having a new observation vector $\bx$, we classify it to one of the considered two groups. Assuming that no prior information is available about the classification result, i.e. the prior probability of each group is $1/2$, the decision which is based on the optimal rule is to assign the observation vector $\mathbf{x}$ to the first group as soon as the following inequality holds (c.f., \cite[Section 6.2]{Rencher1998})
\begin{equation}\label{eq_true}
(\bmu_1-\bmu_2)^\top \bSigma^{-1} \bx >\frac{1}{2} (\bmu_1-\bmu_2)^\top \bSigma^{-1}(\bmu_1+\bmu_2)
\end{equation}
and to the second group otherwise. The error rate is defined as the probability of classifying the observation $\bx$ into one group, while it comes from another one. \cite{Rencher1998} presented the expression of the error rate expressed as
\begin{eqnarray*}
ER_p(\Delta)&=&\frac{1}{2}\mathbb{P}(\text{classify to the first group $|$ second group is true})\\
&+&\frac{1}{2}\mathbb{P}(\text{classify to the second group $|$ first group is true})
\nonumber\\
&=&\Phi\left(-\frac{\Delta}{2}\right) ~~ \text{with} ~~\Delta^2=(\bmu_1-\bmu_2)^\top \bSigma^{-1}(\bmu_1-\bmu_2)\,,
\end{eqnarray*}
where $\Phi(.)$ denotes the distribution function of the standard normal distribution.

In practice, however, $\bmu_1$, $\bmu_2$, and $\bSigma$ are unknown quantities and the decision is based on the inequality
\begin{equation}\label{eq_est}
(\bar{\bx}^{(1)}-\bar{\bx}^{(2)})^\top \bS_{pl}^{-1} \bx >\frac{1}{2} (\bar{\bx}^{(1)}-\bar{\bx}^{(2)})^\top \bS_{pl}^{-1}(\bar{\bx}^{(1)}+\bar{\bx}^{(2)})
\end{equation}
instead. Next, we derive the error rate of the decision rule \eqref{eq_est}. Let
\begin{eqnarray}\label{hat_d}
\hat d&=& (\bar{\bx}^{(1)}-\bar{\bx}^{(2)})^\top \bS_{pl}^{-1} \bx -\frac{1}{2} (\bar{\bx}^{(1)}-\bar{\bx}^{(2)})^\top \bS_{pl}^{-1}(\bar{\bx}^{(1)}+\bar{\bx}^{(2)})\nonumber\\
&=& (\bar{\bx}^{(1)}-\bar{\bx}^{(2)})^\top \bS_{pl}^{-1} \left(\bx-\frac{1}{2}(\bar{\bx}^{(1)}+\bar{\bx}^{(2)})\right)\,.
\end{eqnarray}
In Theorem \ref{th1_3} we present the stochastic representation of $\hat d$.

\begin{theorem}\label{th1_3}
Let $\lambda = 1/n_{1} + 1/n_{2}$. Then, under the condition of Theorem \ref{th1}, the stochastic representation of $\hat d$ is given by
\begin{eqnarray}\label{hat_d_th1_3}
\hat d &{\buildrel d \over =} & \frac{n_1+n_2-2}{\xi}
\Bigg((-1)^{i-1}\frac{\lambda n_i-2}{2\lambda n_i} \left(\lambda \xi_2+(\Delta+\sqrt{\lambda}w_0)^2\right)+\frac{(-1)^{i-1}}{\lambda n_i}\left(\Delta^2+\sqrt{\lambda}\Delta w_0\right)\nonumber\\
&+&\sqrt{\left(1+\frac{1}{n_{1}+n_{2}}+\frac{p-1}{n_{1}+n_{2}-p}u\right)}\sqrt{\lambda \xi_2+(\Delta+\sqrt{\lambda}w_0)^2} z_0 \Bigg)
~~\text{for} ~~ i=1,2,
\end{eqnarray}
where $u|\xi_1,\xi_2,w_0 \sim \mathcal F \left(p-1, n_{1}+n_{2}-p, (n_1+n_2)^{-1}\xi_1 \right)$ with $\xi_1|\xi_2,w_0\sim \chi_{p-1,\delta^2_{\xi_2,w_0}}$ and
$\delta^2_{\xi_2,w_0}=\frac{n_1n_2}{n_i^2}\frac{\Delta^2\xi_2}{\lambda \xi_2+(\Delta+\sqrt{\lambda}w_0)^2}$, $z_0,w_0 \sim \mathcal N (0, 1)$, $\xi \sim \chi^2_{n_1+n_2-p-1}$, $\xi_2\sim\chi^2_{p-1}$; $\xi$, $z_0$ are independent of $u,\xi_1,\xi_2,w_0$ where $\xi_2$ and $w_0$ are independent as well.
\end{theorem}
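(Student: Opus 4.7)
The plan is to reduce the problem to a conditional application of Theorem \ref{th1}. Write $\hat d=\breve{\mathbf x}^T \mathbf S_{pl}^{-1}\mathbf v$ with $\breve{\mathbf x}=\bar{\mathbf x}^{(1)}-\bar{\mathbf x}^{(2)}$ and $\mathbf v=\mathbf x-(\bar{\mathbf x}^{(1)}+\bar{\mathbf x}^{(2)})/2$. By Lemma \ref{l1} and the independence of $\mathbf x$ from the samples, $\mathbf S_{pl}$ is independent of $(\breve{\mathbf x},\mathbf v)$. Conditioning on $(\breve{\mathbf x},\mathbf v)$ and applying Theorem \ref{th1} with $\mathbf L=\mathbf v^T$ (conditionally constant) yields
\[
\hat d\mid\breve{\mathbf x},\mathbf v \stackrel{d}{=} \frac{n_1+n_2-2}{\xi}\left[\mathbf v^T\bSigma^{-1}\breve{\mathbf x}+\sqrt{\tfrac{\breve{\mathbf x}^T\bSigma^{-1}\breve{\mathbf x}}{n_1+n_2-p}\,\mathbf v^T\mathbf R_{\breve{\mathbf x}}\mathbf v}\,t_0\right],
\]
with $\xi\sim\chi^2_{n_1+n_2-p-1}$ and $t_0\sim t_{n_1+n_2-p}$ independent of each other and of $(\breve{\mathbf x},\mathbf v)$. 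It then remains to unfold the joint law of $(\breve{\mathbf x},\mathbf v)$ into the claimed form.

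Since $(\breve{\mathbf x},\mathbf v)$ is jointly Gaussian, I would perform a regression decomposition $\mathbf v=\alpha\breve{\mathbf x}+\beta(\bmu_1-\bmu_2)+\mathbf r$ with $\alpha=(-1)^{i-1}(\lambda n_i-2)/(2\lambda n_i)$, $\beta=(-1)^{i-1}/(\lambda n_i)$, and residual $\mathbf r$ independent of $\breve{\mathbf x}$. Starting from $\text{cov}(\mathbf v,\breve{\mathbf x})=(1/n_2-1/n_1)\bSigma/2$ and $\text{var}(\breve{\mathbf x})=\lambda\bSigma$ one verifies these coefficients (equivalently, identify $\mathbf r=\mathbf x-\bar{\mathbf x}-\beta(\bmu_1-\bmu_2)$ with $\bar{\mathbf x}=(n_1\bar{\mathbf x}^{(1)}+n_2\bar{\mathbf x}^{(2)})/(n_1+n_2)$, which is independent of $\breve{\mathbf x}$), and a direct computation gives $\mathbf r\sim\mathcal N_p(0,(1+(n_1+n_2)^{-1})\bSigma)$. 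Using $\mathbf R_{\breve{\mathbf x}}\breve{\mathbf x}=0$, substitution splits each of the two bilinear pieces into three additive contributions. A $\bSigma^{-1}$-orthonormal decomposition of $\breve{\mathbf x}$ with first direction proportional to $\bSigma^{-1/2}(\bmu_1-\bmu_2)$ produces $w_0\sim\mathcal N(0,1)$ (the centred coordinate along the signal direction) and $\xi_2\sim\chi^2_{p-1}$ (the squared norm of the remaining coordinates), yielding
\[
\breve{\mathbf x}^T\bSigma^{-1}\breve{\mathbf x}=\lambda\xi_2+(\Delta+\sqrt\lambda w_0)^2,\qquad (\bmu_1-\bmu_2)^T\bSigma^{-1}\breve{\mathbf x}=\Delta^2+\sqrt\lambda\Delta w_0,
\]
which, multiplied by $\alpha$ and $\beta$, reproduce the first two terms inside the bracket of the claim.

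For the remaining random pieces I would follow the pattern used in the proof of Corollary \ref{c1}. Corollary~5.1.3a of \citet{MP1992}, applied conditionally on $\breve{\mathbf x}$ via $\mathbf R_{\breve{\mathbf x}}\bSigma\mathbf R_{\breve{\mathbf x}}=\mathbf R_{\breve{\mathbf x}}$ and $\mathrm{tr}(\mathbf R_{\breve{\mathbf x}}\bSigma)=p-1$, identifies $\mathbf v^T\mathbf R_{\breve{\mathbf x}}\mathbf v$ (up to the scale $1+(n_1+n_2)^{-1}$) with a non-central $\chi^2_{p-1}$ variable $\xi_1$ whose non-centrality is an explicit function of $(\xi_2,w_0)$ through $(\bmu_1-\bmu_2)^T\mathbf R_{\breve{\mathbf x}}(\bmu_1-\bmu_2)=\lambda\Delta^2\xi_2/(\lambda\xi_2+(\Delta+\sqrt\lambda w_0)^2)$; Theorem~5.5.1 of \citet{MP1992}, via $\bSigma^{-1}\breve{\mathbf x}\cdot\bSigma\cdot\mathbf R_{\breve{\mathbf x}}=0$, ensures the conditional independence of the Gaussian $\mathbf r^T\bSigma^{-1}\breve{\mathbf x}$ from this quadratic form. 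Writing $t_0=z_0\sqrt{(n_1+n_2-p)/w}$ with $z_0\sim\mathcal N(0,1)$ and $w\sim\chi^2_{n_1+n_2-p}$ independent, the two residual Gaussian contributions ($\mathbf r^T\bSigma^{-1}\breve{\mathbf x}$ and the $t_0$-term) become conditionally independent mean-zero Gaussians, both proportional to $\sqrt{\breve{\mathbf x}^T\bSigma^{-1}\breve{\mathbf x}}$, so their sum collapses into a single $\mathcal N(0,\sigma^2)$. Re-expressing $\sigma^2$ through the non-central $F$-statistic $u=(\xi_1/(p-1))/(w/(n_1+n_2-p))$ gives the variance factor $(1+(n_1+n_2)^{-1}+(p-1)(n_1+n_2-p)^{-1}u)\,\breve{\mathbf x}^T\bSigma^{-1}\breve{\mathbf x}$ appearing in the claim, and the stated hierarchical description of $(u,\xi_1)$ merely records this chain of conditional constructions. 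I expect the main obstacle to be the clean merger of the two Gaussian noise sources while simultaneously identifying the correct non-centrality parameters and tracking every independence relation across the hierarchy $(\xi_2,w_0)\to\xi_1\to u$.
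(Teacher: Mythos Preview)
Your overall plan---condition, apply Theorem \ref{th1}, then regress $\mathbf v$ on $\breve{\mathbf x}$---is sound and close in spirit to the paper's argument, and the identities you record for $\breve{\mathbf x}^T\bSigma^{-1}\breve{\mathbf x}$, $(\bmu_1-\bmu_2)^T\bSigma^{-1}\breve{\mathbf x}$ and $(\bmu_1-\bmu_2)^T\mathbf R_{\breve{\mathbf x}}(\bmu_1-\bmu_2)$ are all correct. The gap is in how you package the residual randomness. By lumping the new observation $\mathbf x$ and the pooled mean $\bar{\mathbf x}=(n_1\bar{\mathbf x}^{(1)}+n_2\bar{\mathbf x}^{(2)})/(n_1+n_2)$ into a single residual $\mathbf r\sim\mathcal N_p\bigl(\mathbf 0,(1+(n_1+n_2)^{-1})\bSigma\bigr)$, you obtain that, given $\breve{\mathbf x}$, the quantity $\mathbf v^T\mathbf R_{\breve{\mathbf x}}\mathbf v$ is $(1+(n_1+n_2)^{-1})$ times a $\chi^2_{p-1}$ with non-centrality $\beta^2(\bmu_1-\bmu_2)^T\mathbf R_{\breve{\mathbf x}}(\bmu_1-\bmu_2)/(1+(n_1+n_2)^{-1})$. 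Consequently the $u$ that makes the variance factor read $1+(n_1+n_2)^{-1}+(p-1)u/(n_1+n_2-p)$ is $(1+(n_1+n_2)^{-1})$ times a non-central $F$, not a genuine non-central $F$ conditional on $\xi_1$; and there is no intermediate $\xi_1$ with the stated law. Your ``chain of conditional constructions'' has only one layer where the theorem asserts two, so the hierarchical description $(\xi_2,w_0)\to\xi_1\to u$ does not fall out of your construction as written.

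The paper gets the two-layer hierarchy by conditioning on $\bar{\mathbf x}^{(1)},\bar{\mathbf x}^{(2)}$ only (not on $\mathbf x$) and applying Corollary \ref{c1} with $\mathbf l=\breve{\mathbf x}$ held fixed and with $\tilde{\mathbf x}=\mathbf x-(\bar{\mathbf x}^{(1)}+\bar{\mathbf x}^{(2)})/2\sim\mathcal N_p(\cdot,\bSigma)$ as the Gaussian vector. This makes the numerator of $u$ a unit-scale $\chi^2_{p-1}$ whose non-centrality is $(\bar{\mathbf x}^{(i)}-\bmu_i)^T\mathbf R_{\breve{\mathbf x}}(\bar{\mathbf x}^{(i)}-\bmu_i)=\xi_1/(n_1+n_2)$; then $\xi_1$ is identified as a second non-central $\chi^2_{p-1}$ through the conditional law of $\bar{\mathbf x}^{(i)}-\bmu_i\mid\breve{\mathbf x}$, which has covariance $(n_1+n_2)^{-1}\bSigma$. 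In your framework the same separation is recovered by splitting $\mathbf r=(\mathbf x-\bmu_i)+(\bmu_i-\bar{\mathbf x}-\beta(\bmu_1-\bmu_2))$ into two independent pieces with covariances $\bSigma$ and $(n_1+n_2)^{-1}\bSigma$: the first generates $u\mid\xi_1$ and the second generates $\xi_1$ (indeed $(\bmu_i-\bar{\mathbf x})^T\mathbf R_{\breve{\mathbf x}}(\bmu_i-\bar{\mathbf x})=(\bar{\mathbf x}^{(i)}-\bmu_i)^T\mathbf R_{\breve{\mathbf x}}(\bar{\mathbf x}^{(i)}-\bmu_i)$ since $\mathbf R_{\breve{\mathbf x}}\breve{\mathbf x}=0$). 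Without this further split your derivation yields a valid but different stochastic representation, not the one stated in the theorem.
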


%---------------------------------
%        Proof
%---------------------------------
\begin{proof}
Let $\bx \sim \mathcal{N}_p\left(\bmu_i,\bSigma\right)$,
Since $\bar{\bx}^{(1)}$, $\bar{\bx}^{(2)}$, $\bx$, and $\bS_{pl}$ are independently distributed, we get that the conditional distribution of $\hat d$ given $\bar{\bx}^{(1)}=\bx^{(1)}_0$ and $\bar{\bx}^{(2)}=\bx^{(2)}_0$ is equal to the distribution of $d_0$ defined by
\[d_0=(\bar{\bx}^{(1)}_0-\bar{\bx}^{(2)}_0)^\top \bS_{pl}^{-1} \tilde{\bx}\,,\]
where $\tilde{\bx}=\bx-\frac{1}{2}(\bar{\bx}^{(1)}_0+\bar{\bx}^{(2)}_0)\sim \mathcal{N}_p\left(\bmu_i-\frac{1}{2}(\bar{\bx}^{(1)}_0+\bar{\bx}^{(2)}_0),\bSigma\right)$, $(n_1+n_2-2)\bS_{pl}\sim \mathcal{W}_p(n_1+n_2-2,\bSigma)$, $\tilde{\bx}$ and $\bS_{pl}$ are independent.

Following the proof of Corollary \ref{c1}, we get
\begin{eqnarray*}
d_0&{\buildrel d \over =} & (n_1+n_2-2)\xi^{-1}
\Bigg( (\bar{\bx}^{(1)}_0-\bar{\bx}^{(2)}_0)^{T}\mathbf\Sigma^{-1}\left(\bmu_i-\frac{1}{2}(\bar{\bx}^{(1)}_0+\bar{\bx}^{(2)}_0)\right)\\
&+&\sqrt{\left(1+\frac{(p-1)}{n_{1}+n_{2}-p}u\right)(\bar{\bx}^{(1)}_0-\bar{\bx}^{(2)}_0)^{T}\mathbf\Sigma^{-1}(\bar{\bx}^{(1)}_0-\bar{\bx}^{(2)}_0)} z_0 \Bigg),
\end{eqnarray*}
where $u \sim \mathcal F \left(p-1, n_{1}+n_{2}-p, \left(\bmu_i-\frac{1}{2}(\bar{\bx}^{(1)}_0+\bar{\bx}^{(2)}_0)\right)^{T}\mathbf R_{0}\left(\bmu_i-\frac{1}{2}(\bar{\bx}^{(1)}_0+\bar{\bx}^{(2)}_0)\right) \right)$
with $\mathbf R_{0}$ $=\mathbf\Sigma^{-1}-\mathbf \Sigma^{-1}(\bar{\bx}^{(1)}_0-\bar{\bx}^{(2)}_0) (\bar{\bx}^{(1)}_0-\bar{\bx}^{(2)}_0)^{T}\mathbf \Sigma^{-1}/ (\bar{\bx}^{(1)}_0-\bar{\bx}^{(2)}_0)^{T}\mathbf\Sigma^{-1}(\bar{\bx}^{(1)}_0-\bar{\bx}^{(2)}_0)$, $z_0 \sim \mathcal N (0, 1)$, and $\xi \sim \chi^2_{n_1+n_2-p-1}$;
$\xi$, $z_0$ and $u$ are mutually independently distributed.

In using that
\[\bmu_i-\frac{1}{2}(\bar{\bx}^{(1)}_0+\bar{\bx}^{(2)}_0)=\bmu_i-\bar{\bx}^{(i)}_0+(-1)^{i-1} \frac{1}{2}(\bar{\bx}^{(1)}_0-\bar{\bx}^{(2)}_0)\]
and $(\bar{\bx}^{(1)}_0-\bar{\bx}^{(2)}_0)^{T}\mathbf{R}_0=\mathbf{0}$, we get
\begin{eqnarray*}
\hat d &{\buildrel d \over =} & \frac{n_1+n_2-2}{\xi}
\Bigg(\frac{(-1)^{i-1}}{2} (\bar{\bx}^{(1)}-\bar{\bx}^{(2)})^{T}\mathbf\Sigma^{-1}(\bar{\bx}^{(1)}-\bar{\bx}^{(2)})
-(\bar{\bx}^{(1)}-\bar{\bx}^{(2)})^{T}\mathbf\Sigma^{-1}\left(\bar{\bx}^{(i)}-\bmu_i\right)\\
&+&\sqrt{\left(1+\frac{p-1}{n_{1}+n_{2}-p}u\right)(\bar{\bx}^{(1)}-\bar{\bx}^{(2)})^{T}\mathbf\Sigma^{-1}(\bar{\bx}^{(1)}-\bar{\bx}^{(2)})} z_0 \Bigg),
\end{eqnarray*}
where $u|\bar{\bx}^{(1)},\bar{\bx}^{(2)} \sim \mathcal F \left(p-1, n_{1}+n_{2}-p, \left(\bar{\bx}^{(i)}-\bmu_i\right)^{T}\mathbf R_{\bx}\left(\bar{\bx}^{(i)}-\bmu_i\right) \right)$
with $\mathbf R_{\bx}$ $=\mathbf\Sigma^{-1}-\mathbf \Sigma^{-1}(\bar{\bx}^{(1)}-\bar{\bx}^{(2)}) (\bar{\bx}^{(1)}-\bar{\bx}^{(2)})^{T}\mathbf \Sigma^{-1}/ (\bar{\bx}^{(1)}-\bar{\bx}^{(2)})^{T}\mathbf\Sigma^{-1}(\bar{\bx}^{(1)}-\bar{\bx}^{(2)})$, $z_0 \sim \mathcal N (0, 1)$, and $\xi \sim \chi^2_{n_1+n_2-p-1}$;
$\xi$, $z_0$ are independent of $u,\bar{\bx}^{(1)},\bar{\bx}^{(2)}$.

Since $\bar{\bx}^{(1)}$ and $\bar{\bx}^{(2)}$ are independent and normally distributed, we get that
\[
\left(
\begin{array}{c}
  \bar{\bx}^{(i)}-\bmu_i \\
  \bar{\bx}^{(1)}-\bar{\bx}^{(2)}
\end{array}
\right)
\sim \mathcal{N}_{2p} \left(
\left(
\begin{array}{c}
  \mathbf{0} \\
  \bmu_1-\bmu_2
\end{array}
\right),
\left(
\begin{array}{cc}
  \frac{1}{n_i}\bSigma & \frac{(-1)^{i-1}}{n_i}\bSigma\\
  \frac{(-1)^{i-1}}{n_i}\bSigma & \lambda \bSigma
\end{array}
\right)\right)
\]
and, consequently,
\[
 \bar{\bx}^{(i)}-\bmu_i | ( \bar{\bx}^{(1)}-\bar{\bx}^{(2)})
\sim \mathcal{N}_{p} \left(
\frac{(-1)^{i-1}}{\lambda n_i}(\bar{\bx}^{(1)}-\bar{\bx}^{(2)}-(\bmu_1-\bmu_2)),\frac{1}{n_1+n_2}\bSigma
\right)\,,
\]
where we used that $\frac{1}{n_i}-\frac{1}{\lambda n_i^2}=\frac{1}{n_1+n_2}$.

The application of Theorem 5.5.1 in \cite{MP1992} shows that given $( \bar{\bx}^{(1)}-\bar{\bx}^{(2)})$ the random variables $(\bar{\bx}^{(1)}-\bar{\bx}^{(2)})^{T}\mathbf\Sigma^{-1}(\bar{\bx}^{(i)}-\bmu_i)$ and $(\bar{\bx}^{(i)}-\bmu_i)\mathbf{R}_{\bx}(\bar{\bx}^{(i)}-\bmu_i)$ are independently distributed with
\begin{eqnarray*}
&&(\bar{\bx}^{(1)}-\bar{\bx}^{(2)})^{T}\mathbf\Sigma^{-1}(\bar{\bx}^{(i)}-\bmu_i)|(\bar{\bx}^{(1)}-\bar{\bx}^{(2)}) \\
&\sim&
\mathcal{N}\left(\frac{(-1)^{i-1}}{\lambda n_i}(\bar{\bx}^{(1)}-\bar{\bx}^{(2)})^{T}\mathbf\Sigma^{-1} (\bar{\bx}^{(1)}-\bar{\bx}^{(2)}-(\bmu_1-\bmu_2)), \frac{1}{n_1+n_2}(\bar{\bx}^{(1)}-\bar{\bx}^{(2)})^{T}\mathbf\Sigma^{-1}(\bar{\bx}^{(1)}-\bar{\bx}^{(2)})\right)
\end{eqnarray*}
and, by using Corollary 5.1.3a of \cite{MP1992},
\[(n_1+n_2)(\bar{\bx}^{(i)}-\bmu_i)^T\mathbf{R}_{\bx}(\bar{\bx}^{(i)}-\bmu_i)|(\bar{\bx}^{(1)}-\bar{\bx}^{(2)})
\sim \chi_{p-1,\delta^2_\bx}\]
with
\begin{eqnarray*}
\delta^2_{\bx}&=&\frac{n_1+n_2}{\lambda^2 n_i^2}(\bar{\bx}^{(1)}-\bar{\bx}^{(2)}-(\bmu_1-\bmu_2))^{T} \mathbf{R}_{\bx} (\bar{\bx}^{(1)}-\bar{\bx}^{(2)}-(\bmu_1-\bmu_2))\\
&=&\frac{n_1+n_2}{\lambda^2 n_i^2}(\bmu_1-\bmu_2)^{T} \mathbf{R}_{\bx} (\bmu_1-\bmu_2)\\
&=&\frac{n_1+n_2}{\lambda^2 n_i^2}\frac{(\bmu_1-\bmu_2)^{T} \bSigma^{-1} (\bmu_1-\bmu_2)}{(\bar{\bx}^{(1)}-\bar{\bx}^{(2)})^{T} \bSigma^{-1} (\bar{\bx}^{(1)}-\bar{\bx}^{(2)})}(\bar{\bx}^{(1)}-\bar{\bx}^{(2)})^{T} \mathbf{R}_{\bmu}(\bar{\bx}^{(1)}-\bar{\bx}^{(2)})
\end{eqnarray*}
where we use that $(\bar{\bx}^{(1)}-\bar{\bx}^{(2)})^{T}\mathbf{R}_{\bx}=\mathbf{0}$ and $\mathbf{R}_{\bmu}= \mathbf\Sigma^{-1}-\mathbf \Sigma^{-1}(\bmu_1-\bmu_2) (\bmu_1-\bmu_2)^{T}\mathbf \Sigma^{-1}/ (\bmu_1-\bmu_2)^{T}\mathbf\Sigma^{-1}(\bmu_1-\bmu_2)$.

As a result, we get
\begin{eqnarray*}
\hat d &{\buildrel d \over =} & \frac{n_1+n_2-2}{\xi}
\Bigg((-1)^{i-1}\frac{\lambda n_i-2}{2\lambda n_i} \Delta_{\bx}^2+\frac{(-1)^{i-1}}{\lambda n_i}(\bmu_1-\bmu_2)^{T}\mathbf\Sigma^{-1}(\bar{\bx}^{(1)}-\bar{\bx}^{(2)})\\
&+&\sqrt{\left(1+\frac{1}{n_{1}+n_{2}}+\frac{p-1}{n_{1}+n_{2}-p}u\right)}\Delta_{\bx} z_0 \Bigg),
\end{eqnarray*}
where $\Delta_{\bx}^2=(\bar{\bx}^{(1)}-\bar{\bx}^{(2)})^{T}\mathbf\Sigma^{-1}(\bar{\bx}^{(1)}-\bar{\bx}^{(2)})$, $u|\bar{\bx}^{(1)},\bar{\bx}^{(2)} \sim \mathcal F \left(p-1, n_{1}+n_{2}-p, (n_1+n_2)^{-1}\xi_1 \right)$ with $\xi_1\sim \chi_{p-1,\delta^2_\bx}$, $z_0 \sim \mathcal N (0, 1)$, and $\xi \sim \chi^2_{n_1+n_2-p-1}$;
$\xi$, $z_0$ are independent of $u,\xi_1,\bar{\bx}^{(1)},\bar{\bx}^{(2)}$.

Finally, it holds with $\Delta^2=(\bmu_1-\bmu_2)^{T}\mathbf\Sigma^{-1}(\bmu_1-\bmu_2)$ that
\begin{eqnarray*}
\Delta_{\bx}^2 &=& (\bar{\bx}^{(1)}-\bar{\bx}^{(2)})^{T} \mathbf{R}_{\bmu}(\bar{\bx}^{(1)}-\bar{\bx}^{(2)})+\frac{\left((\bmu_1-\bmu_2)^{T}\mathbf\Sigma^{-1}(\bar{\bx}^{(1)}-\bar{\bx}^{(2)})\right)^2}{\Delta^2} ,
\end{eqnarray*}
where both summands are independent following Theorem 5.5.1 in \cite{MP1992}. The application of Corollary 5.1.3a in \cite{MP1992} leads to
\[\lambda^{-1}(\bar{\bx}^{(1)}-\bar{\bx}^{(2)})^{T} \mathbf{R}_{\bmu}(\bar{\bx}^{(1)}-\bar{\bx}^{(2)})\sim \chi^2_{p-1}\]
and
\[(\bmu_1-\bmu_2)^{T}\mathbf\Sigma^{-1}(\bar{\bx}^{(1)}-\bar{\bx}^{(2)})\sim \mathcal{N}(\Delta^2,\lambda\Delta^2).\]

From the last statement we get the stochastic representation of $\hat d$ expressed as
\begin{eqnarray*}
\hat d &{\buildrel d \over =} & \frac{n_1+n_2-2}{\xi}
\Bigg((-1)^{i-1}\frac{\lambda n_i-2}{2\lambda n_i} \left(\lambda \xi_2+(\Delta+\sqrt{\lambda}w_0)^2\right)+\frac{(-1)^{i-1}}{\lambda n_i}\left(\Delta^2+\sqrt{\lambda}\Delta w_0\right)\\
&+&\sqrt{\left(1+\frac{1}{n_{1}+n_{2}}+\frac{p-1}{n_{1}+n_{2}-p}u\right)}\sqrt{\lambda \xi_2+(\Delta+\sqrt{\lambda}w_0)^2} z_0 \Bigg),
\end{eqnarray*}
where $u|\xi_1,\xi_2,w_0 \sim \mathcal F \left(p-1, n_{1}+n_{2}-p, (n_1+n_2)^{-1}\xi_1 \right)$ with $\xi_1|\xi_2,w_0\sim \chi_{p-1,\delta^2_{\xi_2,w_0}}$ and
$\delta^2_{\xi_2,w_0}=\frac{n_1+n_2}{\lambda^2 n_i^2}\frac{\Delta^2}{\lambda \xi_2+(\Delta+\sqrt{\lambda}w_0)^2} \lambda\xi_2$, $z_0,w_0 \sim \mathcal N (0, 1)$, $\xi \sim \chi^2_{n_1+n_2-p-1}$, $\xi_2\sim\chi^2_{p-1}$; $\xi$, $z_0$ are independent of $u,\xi_1,\xi_2,w_0$ where $\xi_2$ and $w_0$ are independent as well.
\end{proof}

Theorem \ref{th1_3} shows that the distribution of $\hat d$ is determined by six random variables $\xi,\xi_1,\xi_2$, $z_0,w_0$, and $u$. Moreover, it depends on $\bmu_1,\bmu_2$, and $\bSigma$ only via the quadratic form $\Delta$. As a result, the the error rate based on the decision rule \eqref{eq_est} is a function of $\Delta$ only and it is calculated by
\begin{eqnarray}\label{ERs}
ER_s(\Delta)&=&\frac{1}{2}\mathbb{P}(\text{$\hat d >0|$ second group is true})+\frac{1}{2}\mathbb{P}(\text{$\hat d \le 0|$ first group is true})\,.\nonumber
\end{eqnarray}
The two probabilities in \eqref{ERs} can easily be approximated for all $\Delta$, $p$, $n_1$, and $n_2$ with high precision by applying the results of Theorem \ref{th1_3} via the following simulation study
\begin{enumerate}[(i)]
  \item Fix $\Delta$ and $i \in\{1,2\}$.
  \item Generate four independent random variables $\xi_b\sim \chi^2_{n_1+n_2-p-1}$, $\xi_{2;b}\sim \chi^2_{p-1}$, $z_{0;b} \sim\mathcal N (0, 1)$, and $w_{0;b}\sim\mathcal N (0, 1)$.
  \item Generate $\xi_{1,b} \sim \chi_{p-1,\delta^2_{\xi_2,w_0}}$ with $\delta^2_{\xi_{2,b},w_{0,b}}=\frac{n_1n_2}{n_i^2}\frac{\Delta^2\xi_{2;b}}{\lambda \xi_{2;b}+(\Delta+\sqrt{\lambda}w_{0;b})^2}$.
  \item Generate $u \sim  \mathcal F \left(p-1, n_{1}+n_{2}-p, (n_1+n_2)^{-1}\xi_{1,b} \right)$.
  \item Calculate $\hat d_b^{(i)}$ following the stochastic representation \eqref{hat_d_th1_3} of Theorem \ref{th1_3}.
  \item Repeat steps (ii)-(v) for $b=1,...,B$ leading to the sample $\hat d_{1}^{(i)}$, ..., $\hat d_{B}^{(i)}$.
\end{enumerate}
The procedure has to be performed for both values of $i=1,2$ where for $i=1$ the relative number of events $\{\hat d>0\}$ will approximate the first summand in \eqref{ERs} while for $i=2$ the relative number of events $\{\hat d\le0\}$ will approximate the second summand in \eqref{ERs}.

It is important to note that the difference between the error rates calculated for the two decision rules \eqref{eq_true} ad \eqref{eq_est} could be very large as shown in Figure \ref{fig1} where $ER_p(\Delta)$ and $ER_s(\Delta)$ calculated for several values of $n_1=n_2 \in \{50,100,150,250\}$ with fixed values of $p\in \{10,25,50,75\}$. If $p=10$ we do not observe large differences between $ER_p(\Delta)$ and $ER_s(\Delta)$ computed for different sample sizes. However, this statement does not hold any longer when $p$ becomes comparable to both $n_1$ and $n_2$ as documented for $p=50$ and $p=75$. This case is known in the literature as a large-dimensional asymptotic regime and it is investigated in detail in Section 3.

\begin{figure}
 \centerline{
\includegraphics[width=8.8cm]{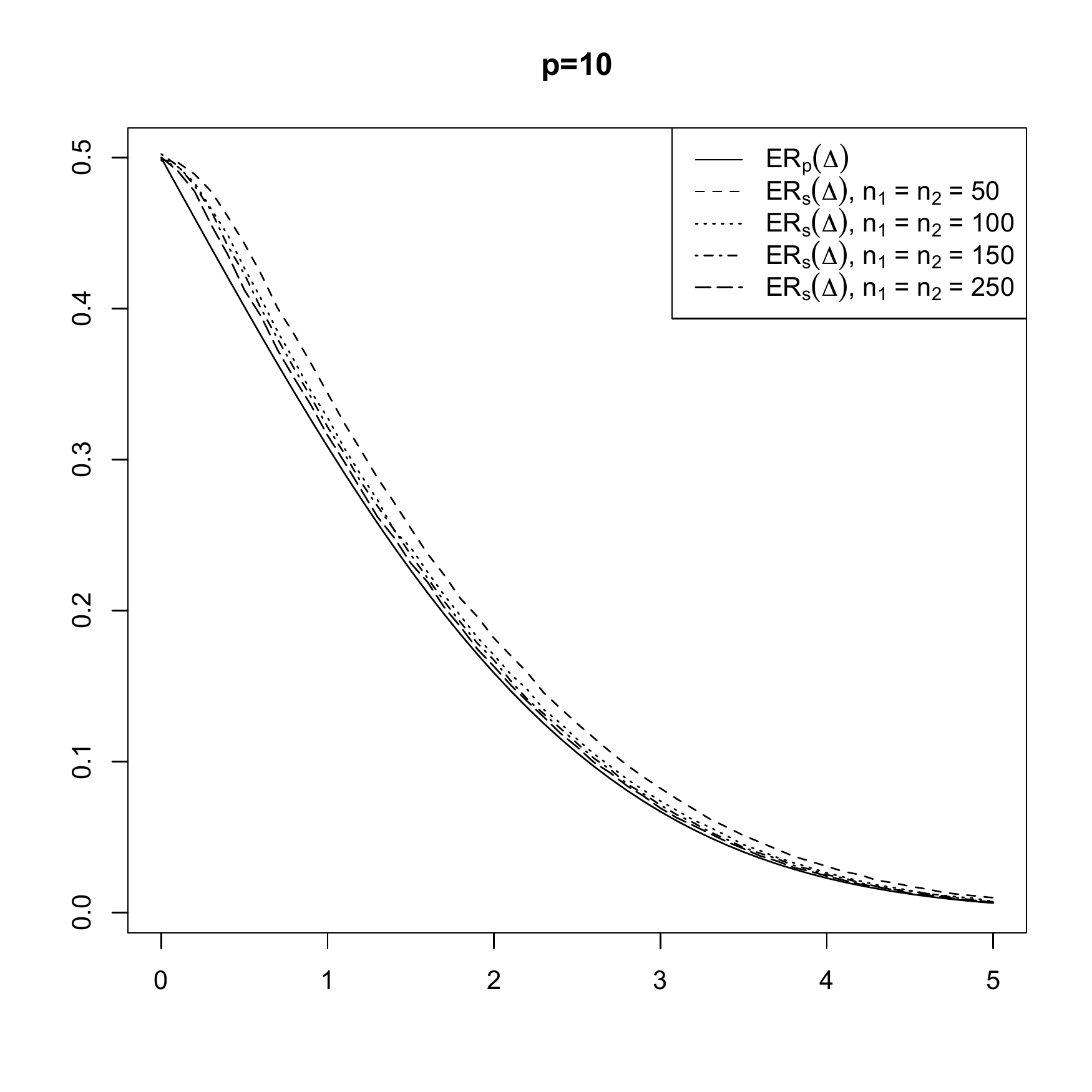}
 \includegraphics[width=8.8cm]{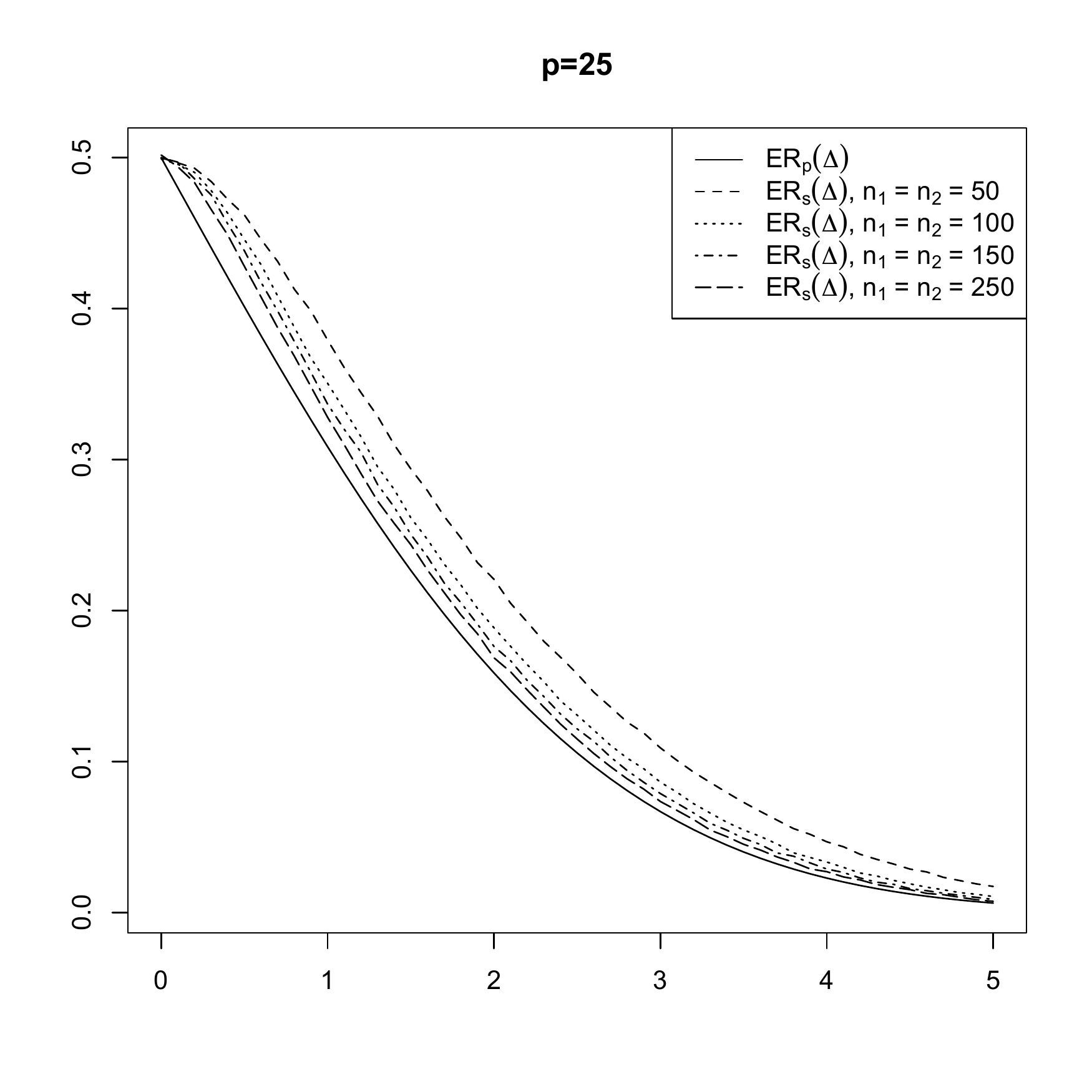}
}
%${ a)\;\; p=50,~n_{1}=25,~n_{2}=475} \hspace{115pt}  { b)\;\; p=250, ~n_{1}=25,~n_{2}=475}$
 \centerline{
\includegraphics[width=8.8cm]{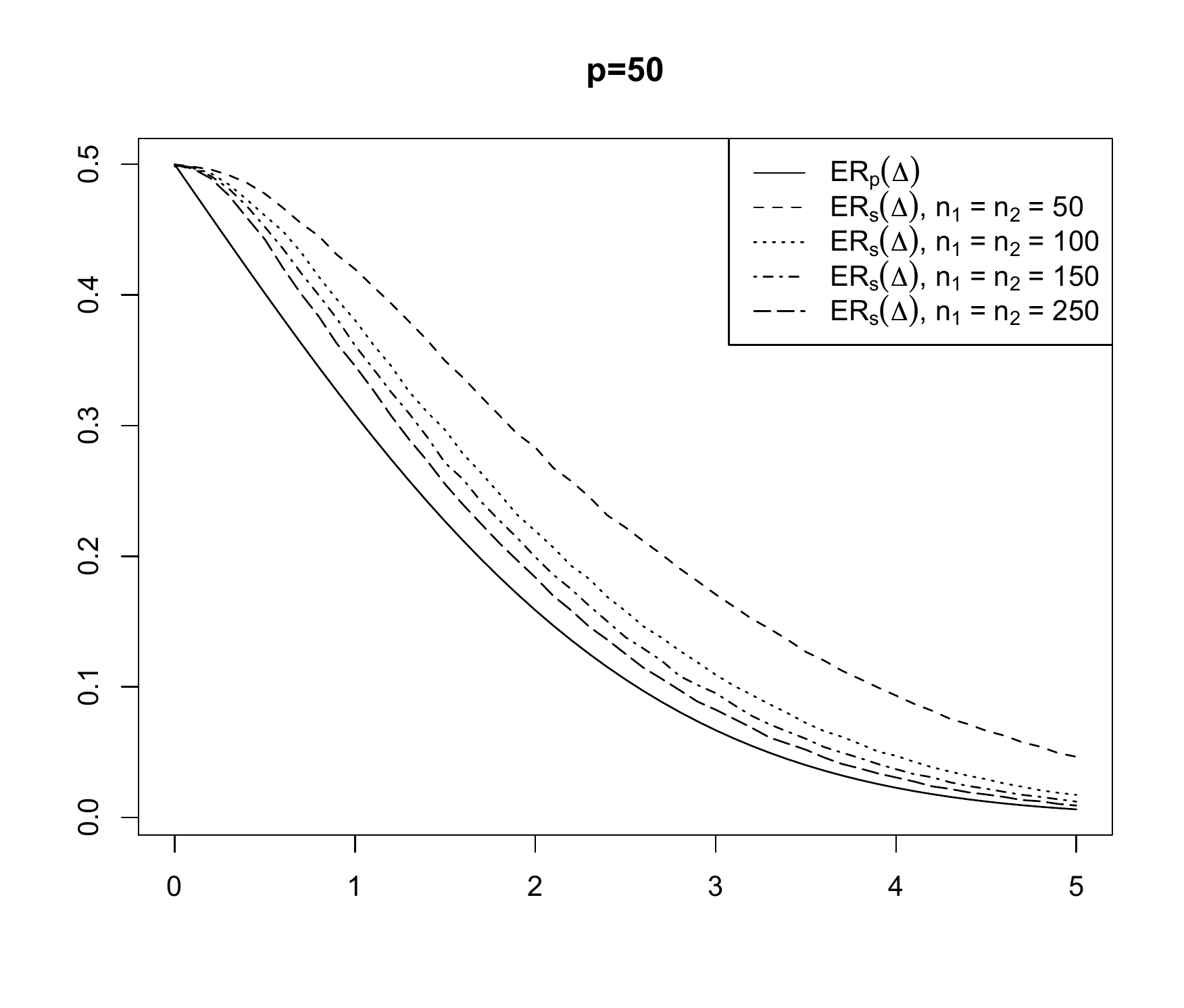}
 \includegraphics[width=8.8cm]{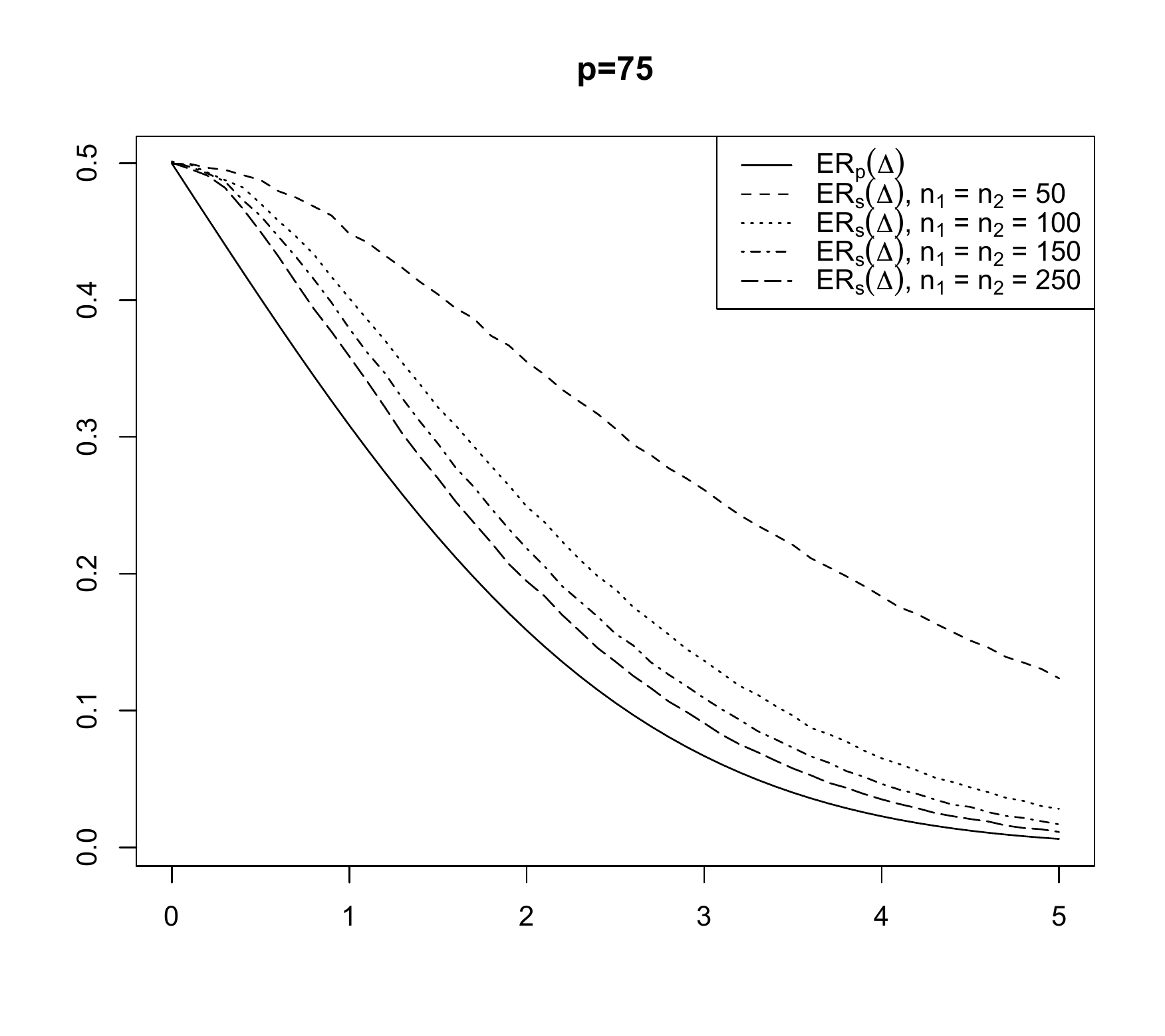}
}
%${ c)\;\; p=400,~n_{1}=25,~n_{2}=475} \hspace{115pt}  { d)\;\; p=475,~n_{1}=25,~n_{2}=475}$
 \caption{Error rates $ER_p(\Delta)$ and $ER_s(\Delta)$ as functions of $\Delta$ for $p\in \{10,25,50,75\}$ and $ER_s(\Delta)$.}
\label{fig1}
 \end{figure}

%%%%%%%%%%%%%%%                            3. Asymptotic Distribution                      %%%%%%%%%%%%%%%%%%%%%
\section{Discriminant analysis under large-dimensional asymptotics}
\labmarg{s4}

In this section we derive the asymptotic distribution of the discriminant function coefficients under the high-dimensional asymptotic regime, that is, when the dimension increases together with the sample sizes and they all tend to infinity. More precisely, we assume that $p/(n_1+n_2) \to c \in [0,1)$ as $n_1+n_2 \to \infty$.

The following conditions are needed for the validity of the asymptotic results:

\begin{enumerate}
\item[(A1)] There exists $\gamma \ge 0$ such that $p^{-\gamma} (\boldsymbol{\mu}_{1}-\boldsymbol{\mu}_{2})^T\bSigma^{-1} (\boldsymbol{\mu}_{1}-\boldsymbol{\mu}_{2})<\infty$ uniformly on $p$.
\item[(A2)]   $0<\lim\limits_{(n_{1}, n_{2}) \to \infty} \left(n_{1}/n_{2}\right)<\infty.$
\end{enumerate}

It is remarkable that, no assumption on the eigenvalues of the covariance matrix $\bSigma$, like they are uniformly bounded on $p$, is imposed. The asymptotic results are also valid when $\bSigma$ possesses unbounded spectrum as well as when its smallest eigenvalue tends to zero as $p \to \infty$. The constant $\gamma$ is a technical one and it controls the growth rate of the quadratic form.
In Theorem \ref{th2} the asymptotic distribution of linear combinations of the discriminant function coefficients is provided.
% The only thing we need to know is if $\gamma =0$ or $\gamma\geq0$.
%\textcolor{blue}

%%%%%%%%%%%%%%                                        Threorem 2                               %%%%%%%%%%%%%%%%%

\begin{theorem}
\labmarg{th2}
Assume (A1) and (A2). Let $\mathbf l$ be a $p$-dimensional vector of constants such that $p^{-\gamma} \mathbf{l}^T\bSigma^{-1} \mathbf{l}<\infty$ is uniformly on $p$, $\gamma \ge 0$. Then, under the conditions of Theorem \ref{th1}, the asymptotic distribution of $\hat \theta = \mathbf l^T \hat{\mathbf a}$ is given by
\begin{align*}
\sqrt{ n_1 + n_2} \sigma_{\gamma}^{-1} \left(\hat \theta
-\frac{1}{1-c} \mathbf l^T \mathbf \Sigma^{-1}(\boldsymbol{\mu}_{1}-\boldsymbol{\mu}_{2})\right)
\stackrel{\mathcal{D}}{\longrightarrow}
\mathcal{N}(0, 1 )
\end{align*}
for $p/(n_1+n_2) \to c \in [0,1)$ as $n_1+n_2 \to \infty$ with
\begin{eqnarray}\label{sig_gam}
\sigma^{2}_{\gamma}
&=&\frac{1}{(1-c)^3}\Bigg(\left(\mathbf l^{T}\mathbf\Sigma^{-1}(\boldsymbol{\mu}_{1}-\boldsymbol{\mu}_{2})\right)^2+\mathbf l^{T}\mathbf\Sigma^{-1}\mathbf l (\boldsymbol{\mu}_{1}-\boldsymbol{\mu}_{2})^{T}\mathbf\Sigma^{-1}(\boldsymbol{\mu}_{1}-\boldsymbol{\mu}_{2})\\
&+&\lambda (n_1+n_2) \mathbf l^{T}\mathbf\Sigma^{-1}\mathbf l \mathds{1}_{\{0\}}(\gamma)
\Bigg) \nonumber
\end{eqnarray}
where $\mathds{1}_{\mathcal{A}}(.)$ denotes the indicator function of set $\mathcal{A}$.
\end{theorem}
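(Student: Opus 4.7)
My plan is to start from the stochastic representation of $\hat\theta$ provided in Corollary \ref{c1}, which expresses $\hat\theta$ as an explicit function of three independent random variables: $\xi\sim\chi^2_{n-p-1}$ (with $n:=n_1+n_2$), $u$ (non-central $\mathcal F$), and $z_0\sim\mathcal N(0,1)$. Setting $A=\mathbf l^T\bSigma^{-1}(\bmu_1-\bmu_2)$, $\Delta^2=(\bmu_1-\bmu_2)^T\bSigma^{-1}(\bmu_1-\bmu_2)$ and $V=\bigl(\lambda+\lambda(p-1)u/(n-p)\bigr)\mathbf l^T\bSigma^{-1}\mathbf l$, and using the algebraic identity $\lambda\mathbf l^T\bSigma^{-1}\mathbf l\,\delta^2=\mathbf l^T\bSigma^{-1}\mathbf l\,\Delta^2-A^2$ that follows from the definition of $\mathbf R_{\mathbf l}$, the representation rearranges into
\begin{equation*}
\hat\theta-\frac{A}{1-c}\;\stackrel{d}{=}\;\underbrace{A\!\left(\frac{n-2}{\xi}-\frac{1}{1-c}\right)}_{T_1}\;+\;\underbrace{\frac{n-2}{\xi}\sqrt V\,z_0}_{T_2}.
\end{equation*}
Crucially, $T_1$ depends only on $\xi$, while $T_2$ given $(\xi,V)$ is centered Gaussian, independent of $T_1$ because $z_0\perp\xi$.

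For $T_1$, the classical CLT $(\xi-(n-p-1))/\sqrt{2(n-p-1)}\xrightarrow{d}\eta\sim\mathcal N(0,1)$, combined with a first-order Taylor expansion of $1/\xi$ and with the vanishing of the deterministic bias $\sqrt n[(n-2)/(n-p-1)-1/(1-c)]$ under $p/n\to c$, yields $\sqrt n\,T_1\xrightarrow{d}-A\sqrt 2/(1-c)^{3/2}\,\eta$, whose variance is $2A^2/(1-c)^3$. For $T_2$, I would use the representation $u=(n-p)\zeta/[(p-1)w]$ with $w\sim\chi^2_{n-p}$ independent of $\zeta\sim\chi^2_{p-1,\delta^2}$; because the coefficients of variation of $w$ and $\zeta$ are asymptotically negligible under (A1)--(A2), the laws of large numbers $w/(n-p)\xrightarrow{p}1$ and $\zeta/(p-1+\delta^2)\xrightarrow{p}1$ together with the algebraic identity give
\begin{equation*}
nV\;\xrightarrow{p}\;V_\infty:=\frac{\lambda(n_1+n_2)\mathbf l^T\bSigma^{-1}\mathbf l+\mathbf l^T\bSigma^{-1}\mathbf l\,\Delta^2-A^2}{1-c}.
\end{equation*}
Together with $(n-2)/\xi\xrightarrow{p}1/(1-c)$, this makes the conditional variance of $\sqrt n\,T_2$ given $(\xi,V)$ converge in probability to $V_\infty/(1-c)^2$.

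To combine the two pieces, I would exploit the independence $z_0\perp(\xi,V)$ to write the joint characteristic function as
\begin{equation*}
E\!\bigl[e^{is\sqrt n T_1/\sigma_\gamma+it\sqrt n T_2/\sigma_\gamma}\bigr]=E\!\Bigl[e^{is\sqrt n T_1/\sigma_\gamma}\exp\!\bigl(-\tfrac{t^2\,n(n-2)^2 V}{2\xi^2\sigma_\gamma^2}\bigr)\Bigr];
\end{equation*}
the Gaussian factor is bounded and converges in probability to a constant, so dominated convergence together with the limit for $T_1$ gives joint convergence of $(\sqrt nT_1/\sigma_\gamma,\sqrt nT_2/\sigma_\gamma)$ to a pair of independent centered Gaussians. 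Hence $\sqrt n(T_1+T_2)/\sigma_\gamma$ is asymptotically Gaussian with variance
\begin{equation*}
\frac{2A^2/(1-c)+V_\infty}{(1-c)^2\sigma_\gamma^2}=\frac{A^2+\mathbf l^T\bSigma^{-1}\mathbf l\,\Delta^2+\lambda(n_1+n_2)\mathbf l^T\bSigma^{-1}\mathbf l}{(1-c)^3\,\sigma_\gamma^2},
\end{equation*}
which equals $1$ when $\gamma=0$ and, for $\gamma>0$, differs from $1$ only by a term of order $\lambda(n_1+n_2)\mathbf l^T\bSigma^{-1}\mathbf l=O(p^\gamma)$ that is asymptotically negligible against the $O(p^{2\gamma})$ quantities $A^2$ and $\mathbf l^T\bSigma^{-1}\mathbf l\,\Delta^2$; this dichotomy is exactly what the indicator $\mathds 1_{\{0\}}(\gamma)$ in the definition of $\sigma_\gamma^2$ encodes.

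The main technical obstacle will be the concentration statement $nV\xrightarrow{p}V_\infty$: since the non-centrality $\delta^2$ can grow like $n^{1+\gamma}$ under (A1), one must keep careful track of the $O_p(1/\sqrt{p+\delta^2})$ relative fluctuations of $\zeta$ as they propagate through the product $\lambda n\mathbf l^T\bSigma^{-1}\mathbf l\cdot(w+\zeta)/w$, and verify that the resulting error is $o_p(\sigma_\gamma/\sqrt n)$ uniformly across the two regimes $\gamma=0$ and $\gamma>0$ in which different terms dominate $\sigma_\gamma^2$.
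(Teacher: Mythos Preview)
Your proposal is correct and follows essentially the same route as the paper: both start from the stochastic representation of Corollary~\ref{c1}, split $\hat\theta-\frac{A}{1-c}$ into the $\xi$-fluctuation piece and the $z_0$-piece, apply the $\chi^2$ CLT to the former and a law-of-large-numbers concentration to the $u$-term in the latter, and then combine via the independence of $z_0$ from $(\xi,u)$. The only cosmetic differences are that the paper outsources the concentration of $u$ to an external lemma (Lemma~3 in \cite{bodnar2016exact}) whereas you work directly with the $\zeta/w$ representation, and you are more explicit about the joint-convergence step via characteristic functions; note also that the vanishing of the centering bias $\sqrt{n}\bigl[(n-2)/(n-p-1)-1/(1-c)\bigr]$ actually requires the rate $p/n=c+o(n^{-1/2})$, which the paper inserts in its proof and you should make explicit as well.
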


%%%%%% Proof Theorem 2   %%%%%%%%%%%%%%%%%

\begin{proof}
Using the stochastic representation \eqref{stoch_pres_theta} of Corollary \ref{c1}, we get
\begin{eqnarray*}
&&\sqrt{ n_1 + n_2} \sigma_{\gamma}^{-1} \left(\hat \theta -\frac{1}{1-c} \mathbf l^T \mathbf \Sigma^{-1}(\boldsymbol{\mu}_{1}-\boldsymbol{\mu}_{2})\right)\\
&{\buildrel d \over =}&\sqrt{ n_1 + n_2} \left((n_1+n_2-2)\xi^{-1}-\frac{1}{1-c} \right) \frac{p^{-\gamma}\mathbf l^{T}\mathbf\Sigma^{-1}(\boldsymbol{\mu}_{1}-\boldsymbol{\mu}_{2})}{p^{-\gamma}\sigma_{\gamma}}\\
&+& \sqrt{ \lambda (n_1 + n_2)} \frac{n_1+n_2-2}{\xi} \sqrt{\left(p^{-\gamma}+p^{-\gamma}\frac{p-1}{n_{1}+n_{2}-p}u\right)} \frac{\sqrt{p^{-\gamma}\mathbf l^{T}\mathbf\Sigma^{-1}\mathbf  l}}{p^{-\gamma}\sigma_{\gamma}} z_0 ,
\end{eqnarray*}
where
$\xi \sim \chi^2_{n_1+n_2-p-1}$, $z_0 \sim \mathcal N (0, 1)$,
$u \sim \mathcal F \left(p-1, n_{1}+n_{2}-p, (\boldsymbol{\mu}_{1}-\boldsymbol{\mu}_{2})^{T}\mathbf R_{\mathbf l}(\boldsymbol{\mu}_{1}-\boldsymbol{\mu}_{2})/ \lambda \right)$
with $\mathbf R_{\mathbf l}=\mathbf\Sigma^{-1}-\mathbf \Sigma^{-1}\mathbf l \mathbf l^{T}\mathbf \Sigma^{-1}/ \mathbf l^{T}\mathbf\Sigma^{-1}\mathbf l$;
$\xi$, $z_0$ and $u$ are mutually independently distributed.

Since, $\xi \sim \chi^2_{n_1+n_2-p-1}$, we get that
\[\sqrt{n_1+n_2-p-1}\left(\frac{\xi}{n_1+n_2-p-1}-1\right) \stackrel{\mathcal{D}}{\longrightarrow}
\mathcal{N}(0, 2 )
\]
for $p/(n_1+n_2) \to c \in [0,1)$ as $n_1+n_2 \to \infty$ and, consequently,
\begin{eqnarray*}
&&\sqrt{ n_1 + n_2} \left((n_1+n_2-2)\xi^{-1}-\frac{1}{1-c} \right) =\frac{\sqrt{ n_1 + n_2}}{\sqrt{n_1+n_2-p-1}}\frac{n_1+n_2-p-1}{\xi}\frac{1}{1-c}\\
&\times&\sqrt{n_1+n_2-p-1} \left((1-c)\frac{n_1+n_2-2}{n_1+n_2-p-1}- \frac{\xi}{n_1+n_2-p-1} \right)\stackrel{\mathcal{D}}{\longrightarrow} \tilde{z_0}\sim
\mathcal{N}\left(0, \frac{2}{1-c}\right)
\end{eqnarray*}
for $\frac{p}{n_1+n_2}=c+o((n_1+n_2)^{-1/2})$ where $z_0$ and $\tilde{z}_0$ are independent.

Furthermore, we get (see, \cite[Lemma 3]{bodnar2016exact})
\begin{eqnarray*}
p^{-\gamma}+p^{-\gamma}\frac{p-1}{n_{1}+n_{2}-p}u -\mathds{1}_{\{0\}}(\gamma)-\frac{c}{1-c}\left(\mathds{1}_{\{0\}}(\gamma)+\frac{p^{-\gamma}(\boldsymbol{\mu}_{1}-\boldsymbol{\mu}_{2})^{T}\mathbf R_{\mathbf l}(\boldsymbol{\mu}_{1}-\boldsymbol{\mu}_{2})}{c\lambda (n_1+n_2)}\right)&\stackrel{a.s.}{\longrightarrow} &0
\end{eqnarray*}

Putting the above results together, we get the statement of the theorem with
\begin{eqnarray*}
\sigma_{\gamma}^2&=&\frac{1}{(1-c)^3}\Bigg(2\left(\mathbf l^{T}\mathbf\Sigma^{-1}(\boldsymbol{\mu}_{1}-\boldsymbol{\mu}_{2})\right)^2+ \mathbf l^{T}\mathbf\Sigma^{-1}\mathbf l (\boldsymbol{\mu}_{1}-\boldsymbol{\mu}_{2})^{T}\mathbf R_{\mathbf l}(\boldsymbol{\mu}_{1}-\boldsymbol{\mu}_{2})\\
&+& \lambda (n_1+n_2) \mathbf l^{T}\mathbf\Sigma^{-1}\mathbf l \mathds{1}_{\{0\}}(\gamma)\Bigg)\\
&=&\frac{1}{(1-c)^3}\Bigg(\left(\mathbf l^{T}\mathbf\Sigma^{-1}(\boldsymbol{\mu}_{1}-\boldsymbol{\mu}_{2})\right)^2+\mathbf l^{T}\mathbf\Sigma^{-1}\mathbf l (\boldsymbol{\mu}_{1}-\boldsymbol{\mu}_{2})^{T}\mathbf\Sigma^{-1}(\boldsymbol{\mu}_{1}-\boldsymbol{\mu}_{2})\\
&+&\lambda (n_1+n_2) \mathbf l^{T}\mathbf\Sigma^{-1}\mathbf l \mathds{1}_{\{0\}}(\gamma)
\Bigg)
\end{eqnarray*}
\end{proof}

The results of Theorem \ref{th2} show that the quantity $\gamma$ is present only in the asymptotic variance $\sigma_{\gamma}^2$. Moreover, if $\gamma>0$, then the factor $\lambda (n_{1}+n_{2})$ vanishes and therefore the assumption (A2) is no longer needed. However, in the case of $\gamma=0$ we need (A2) in order to keep the variance bounded. We further investigate this point via simulations in Section 3.3, by choosing $\gamma>0$ and considering small $n_{1}$ and large $n_{2}$ such that $n_{1}/n_{2}\to 0$.

\subsection{Classification analysis in high dimension}

The error rate of the classification analysis based on the optimal decision rule \eqref{eq_true} remains the same independently of $p$ and it is always equal to
\[ER_p(\Delta)=\Phi\left(-\frac{\Delta}{2}\right) ~~ \text{with} ~~\Delta^2=(\bmu_1-\bmu_2)^\top \bSigma^{-1}(\bmu_1-\bmu_2)\,.\]
In practice, however, $\bmu_1$, $\bmu_2$, and $\bSigma$ are not known and, consequently, one has to make the decision based on \eqref{eq_est} instead of \eqref{eq_true}. In Theorem \ref{th3}, we derived the asymptotic distribution of $\hat{d}$ under the large-dimensional asymptotics.

\begin{theorem}
\labmarg{th3}
Assume (A1) and (A2). Let $p^{-\gamma}\Delta^2 \to \widetilde{\Delta}^2$ and $\lambda n_i \to b_i$ for $p/(n_1+n_2) \to c \in [0,1)$ as $n_1+n_2 \to \infty$. Then, under the conditions of Theorem \ref{th1}, it holds that
\begin{align*}
&p^{\min(\gamma,1)/2}\left(\frac{\hat d}{p^\gamma}- \frac{n_1+n_2-2}{n_1+n_2-p-1}\frac{(-1)^{i-1}}{2}p^{-\gamma}\Delta^2\right)\\
&\stackrel{\mathcal{D}}{\longrightarrow}
\mathcal{N}\Bigg((-1)^{i-1}\frac{c}{1-c}\frac{b_i-2}{2b_i}(b_1+b_2)\mathds{1}_{\{0\}}(\gamma),\\
&\frac{c}{2(1-c)^3}\widetilde{\Delta}^4 \mathds{1}_{[1,+\infty)}(\gamma)+\frac{1}{(1-c)^3}(c(b_1+b_2)\mathds{1}_{\{0\}}(\gamma)+\widetilde{\Delta}^2 \mathds{1}_{[0,1]}(\gamma))\Bigg)
\end{align*}
for $p/(n_1+n_2) \to c \in [0,1)$ as $n_1+n_2 \to \infty$.
\end{theorem}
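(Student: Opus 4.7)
The plan is to start from the stochastic representation of $\hat d$ in Theorem~\ref{th1_3} and to isolate an exact deterministic leading term of order $\Delta^2$. Expanding $(\Delta+\sqrt{\lambda}w_0)^2 = \Delta^2+2\sqrt{\lambda}\Delta w_0+\lambda w_0^2$ inside the bracket of \eqref{hat_d_th1_3} reveals an algebraic cancellation: the coefficients of $\Delta^2$ arising from $\tfrac{\lambda n_i-2}{2\lambda n_i}$ and $\tfrac{1}{\lambda n_i}$ sum to exactly $\tfrac{1}{2}$. Setting
\[
K'=(-1)^{i-1}\Bigl[\Bigl(1-\tfrac{1}{\lambda n_i}\Bigr)\sqrt{\lambda}\Delta w_0+\tfrac{\lambda n_i-2}{2\lambda n_i}(\lambda\xi_2+\lambda w_0^2)\Bigr],
\]
and letting $N$ denote the conditionally Gaussian $z_0$-term, the representation rewrites as $\hat d\stackrel{d}{=}\tfrac{n_1+n_2-2}{\xi}\bigl[(-1)^{i-1}\tfrac{\Delta^2}{2}+K'+N\bigr]$. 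Subtracting the stated centring and multiplying by $p^{\min(\gamma,1)/2}$, the quantity of interest splits into three pieces: (I) the $\xi$-fluctuation multiplying $(-1)^{i-1}\Delta^2/(2p^\gamma)$, (II) $\tfrac{n_1+n_2-2}{\xi}K'/p^\gamma$, and (III) $\tfrac{n_1+n_2-2}{\xi}N/p^\gamma$, each to be analysed separately.

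The second step is to collect the high-dimensional chi-square limits needed: $\xi/(n_1+n_2-p-1)\stackrel{a.s.}{\longrightarrow}1$ together with the CLT $\sqrt{n_1+n_2-p-1}\bigl(\xi/(n_1+n_2-p-1)-1\bigr)\stackrel{\mathcal{D}}{\longrightarrow}\mathcal N(0,2)$, and $\xi_2/(p-1)\stackrel{a.s.}{\longrightarrow}1$. For the noncentral $F$ variable I would use its representation $\tfrac{p-1}{n_1+n_2-p}u=\xi_1'/w$ with $\xi_1'\mid\xi_1\sim\chi^2_{p-1,\,\xi_1/(n_1+n_2)}$ and $w\sim\chi^2_{n_1+n_2-p}$. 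The crucial observation is that the effective noncentrality $\xi_1/(n_1+n_2)$ remains bounded even though $\delta^2_{\xi_2,w_0}$ itself is of order $p$; a LLN on $\xi_1'$ then yields the clean scale limit $1+\tfrac{1}{n_1+n_2}+\tfrac{p-1}{n_1+n_2-p}u\stackrel{a.s.}{\longrightarrow}\tfrac{1}{1-c}$. This identification is the main technical obstacle: missing this nested-noncentrality bound would produce a scale limit dependent on $\widetilde\Delta^2$ and on the index $i$, and the pieces of the variance would fail to match the claim.

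Finally I would assemble the contributions case by case in $\gamma$. Writing $Z=(\xi-(n_1+n_2-p-1))/\sqrt{n_1+n_2-p-1}\stackrel{\mathcal{D}}{\longrightarrow}\mathcal N(0,2)$, Term~(I) behaves asymptotically like $-\tfrac{(-1)^{i-1}p^{-\gamma}\Delta^2}{2(1-c)^{3/2}\sqrt{n_1+n_2}}Z$, which after the $p^{\min(\gamma,1)/2}$ scaling contributes a Gaussian of variance $\tfrac{c\widetilde\Delta^4}{2(1-c)^3}$ exactly when $\gamma\ge1$ and is $o_p(1)$ otherwise, explaining the first indicator in the claimed variance. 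Term~(II) carries no asymptotic variance; when $\gamma=0$ the summand $\tfrac{\lambda n_i-2}{2\lambda n_i}\lambda\xi_2\stackrel{a.s.}{\longrightarrow}\tfrac{b_i-2}{2b_i}c(b_1+b_2)$ produces, via the outer factor $1/(1-c)$, the asymptotic mean stated in the theorem, while for $\gamma>0$ every summand of $K'$ is of strictly smaller order and vanishes. Term~(III) uses the deterministic limits $\sqrt{\lambda\xi_2+(\Delta+\sqrt{\lambda}w_0)^2}/p^{\gamma/2}\to\widetilde\Delta$ for $\gamma>0$ and $\to\sqrt{c(b_1+b_2)+\widetilde\Delta^2}$ for $\gamma=0$, together with the scale limit from the previous step, to yield a centred Gaussian contributing variance $\widetilde\Delta^2/(1-c)^3$ for $\gamma\in[0,1]$ and the extra summand $c(b_1+b_2)/(1-c)^3$ at $\gamma=0$. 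Since $\xi$ is independent of $(\xi_1,\xi_2,w_0,u,w,z_0)$ and $z_0$ is independent of the remaining randomness, joint Gaussian convergence with asymptotically independent summands is automatic, and summing variances reproduces exactly the expression with its indicator functions.
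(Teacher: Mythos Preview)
Your proposal is correct and follows essentially the same route as the paper's own proof: both start from the stochastic representation of Theorem~\ref{th1_3}, expand $(\Delta+\sqrt{\lambda}w_0)^2$, separate the deterministic $\Delta^2/2$ leading term from the fluctuation pieces, and then pass to the limit using the chi-square CLT for $\xi$, the LLN for $\xi_2$, and the convergence of the noncentral $F$ factor. The paper collapses the whole expansion into a single display and cites Lemma~3 of \cite{bodnar2016exact} together with Slutsky's theorem; your version is simply more explicit, spelling out the algebraic cancellation $\tfrac{\lambda n_i-2}{2\lambda n_i}+\tfrac{1}{\lambda n_i}=\tfrac12$, the three-term split (I)--(III), the boundedness of the effective noncentrality $\xi_1/(n_1+n_2)$, and the case distinction in $\gamma$ that produces the indicator functions in the limiting mean and variance.
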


%%%%%% Proof Theorem 3   %%%%%%%%%%%%%%%%%
\begin{proof}
The application of Theorem \ref{th1_3} leads to
\begin{eqnarray*}
&&p^{\min(\gamma,1)/2}\left(\frac{\hat d}{p^\gamma}- \frac{n_1+n_2-2}{n_1+n_2-p-1}\frac{(-1)^{i-1}}{2}p^{-\gamma}\Delta^2\right)\\
 &{\buildrel d \over =} & p^{\min(\gamma,1)/2-1/2} \frac{\sqrt{p}}{\sqrt{n_1+n_2-p-1}} \frac{n_1+n_2-2}{\xi} \sqrt{n_1+n_2-p-1}
 \left(1- \frac{\xi}{n_1+n_2-p-1}\right)\\
 &\times&\frac{(-1)^{i-1}}{2}p^{-\gamma}\Delta^2
+ \frac{n_1+n_2-2}{\xi}\Bigg((-1)^{i-1}\frac{\lambda n_i-2}{2\lambda n_i}\\
&\times& \left(p^{\min(\gamma,1)/2-\gamma}\lambda \xi_2+2p^{\min(\gamma,1)/2-\gamma/2}\sqrt{p^{-\gamma}\Delta^2}\sqrt{\lambda}w_0+p^{\min(\gamma,1)/2-\gamma}\lambda w_0^2\right)\\
&+&\frac{(-1)^{i-1}}{\lambda n_i}p^{\min(\gamma,1)/2-\gamma/2}\sqrt{p^{-\gamma}\Delta^2}\sqrt{\lambda}w_0\Bigg)\\
&+& \frac{n_1+n_2-2}{\xi} \Bigg(\sqrt{\left(1+\frac{1}{n_{1}+n_{2}}+\frac{p-1}{n_{1}+n_{2}-p}u\right)}\\
&\times&
\sqrt{p^{\min(\gamma,1)-2\gamma}\lambda \xi_2+(p^{\min(\gamma,1)/2-\gamma/2}\sqrt{p^{-\gamma}\Delta^2}+p^{\min(\gamma,1)/2-\gamma}\sqrt{\lambda}w_0)^2} z_0 \Bigg)\\
\end{eqnarray*}

\begin{eqnarray*}&\stackrel{\mathcal{D}}{\longrightarrow}& \mathcal{N}\Bigg((-1)^{i-1}\frac{c}{1-c}\frac{b_i-2}{2b_i}(b_1+b_2)\mathds{1}_{\{0\}}(\gamma),\\
&&\frac{c}{2(1-c)^3}\widetilde{\Delta}^4 \mathds{1}_{[1,+\infty)}(\gamma)+\frac{1}{(1-c)^3}(c(b_1+b_2)\mathds{1}_{\{0\}}(\gamma)+\widetilde{\Delta}^2 \mathds{1}_{[0,1]}(\gamma)) \Bigg),
\end{eqnarray*}
where the last line follows from Lemma 3 in \cite{bodnar2016exact} and Slutsky Theorem (see, \cite[Theorem 1.5]{DASG2008}).
%where $u|\xi_1,\xi_2,w_0 \sim \mathcal F \left(p-1, n_{1}+n_{2}-p, (n_1+n_2)^{-1}\xi_1 \right)$ with $\xi_1|\xi_2,w_0\sim \chi_{p-1,\delta^2_{\xi_2,w_0}}$ and
%$\delta^2_{\xi_2,w_0}=\frac{n_1n_2}{n_i^2}\frac{\Delta\xi_2}{\lambda \xi_2+(\sqrt{\Delta}+\sqrt{\lambda}w_0)^2}$, $z_0,w_0 \sim \mathcal N (0, 1)$, $\xi \sim \chi^2_{n_1+n_2-p-1}$, $\xi_2=\chi^2_{p-1}$; $\xi$, $z_0$ are independent of $u,\xi_1,\xi_2,w_0$ where $\xi_2$ and $w_0$ are independent as well.
\end{proof}

The parameters of the limit distribution derived in Theorem \ref{th3} can be significantly simplified in the special case of $n_1=n_2$ because of $\lambda n_1=\lambda n_2=2$. The results of Theorem \ref{th3} are also used to derived the approximate error rate for the decision \eqref{eq_est}. Let $a=\frac{1}{1-c}\frac{1}{2}p^{-\gamma}\Delta$. Then, the error rate is given by
\begin{eqnarray*}
ER_s(\Delta)&=&\frac{1}{2}\mathbb{P}\left\{\hat d >0|i=2\right\}+\frac{1}{2}\mathbb{P}\left\{\hat d \le 0|i=1\right\}\\
&=& \frac{1}{2}\mathbb{P}\left\{p^{\min(\gamma,1)/2}\left(\frac{\hat d}{p^\gamma}-(-1)^{i-1}a\right) >-p^{\min(\gamma,1)/2}(-1)^{i-1}a|i=2\right\}\\
&+&\frac{1}{2}\mathbb{P}\left\{p^{\min(\gamma,1)/2}\left(\frac{\hat d}{p^\gamma}-(-1)^{i-1}a\right) \le -p^{\min(\gamma,1)/2}(-1)^{i-1}a|i=1\right\}\\
&\approx& \frac{1}{2}\left(1-\Phi\left(\frac{ap^{\min(\gamma,1)/2}-m_2}{v}\right)\right)+\frac{1}{2}\Phi\left(\frac{-ap^{\min(\gamma,1)/2}-m_1}{v}\right),
\end{eqnarray*}
with
\begin{align*}
m_1&= \frac{c}{1-c} \frac{b_1-2}{2b_1}(b_1+b_2)\mathds{1}_{\{0\}}(\gamma),~~ m_2= -\frac{c}{1-c} \frac{b_2-2}{2b_2}(b_1+b_2)\mathds{1}_{\{0\}}(\gamma),\\
v^2&= \frac{c}{2(1-c)^3}(p^{-\gamma}\Delta^2)^2 \mathds{1}_{[1,+\infty)}(\gamma)+\frac{1}{(1-c)^3}(c(b_1+b_2)\mathds{1}_{\{0\}}(\gamma)+p^{-\gamma}\Delta^2
 \mathds{1}_{[0,1]}(\gamma)),
\end{align*}
where we approximate $\widetilde{\Delta}^2$ by $p^{-\gamma}\Delta^2$.

In the special case of $n_1=n_2$ which leads to $b_1=b_2=2$, we get
\begin{eqnarray*}
ER_s(\Delta)&=& \Phi\left(-h_c\frac{\Delta}{2}\right)
\end{eqnarray*}
with
\begin{eqnarray*}
h_c=\frac{p^{\min(\gamma,1)/2-\gamma}\sqrt{1-c}\sqrt{p^{-\gamma}\Delta^2}}{\sqrt{c(p^{-\gamma}\Delta^2)^2 \mathds{1}_{[1,+\infty)}(\gamma)/2+4c\mathds{1}_{\{0\}}(\gamma)+p^{-\gamma}\Delta^2
 \mathds{1}_{[0,1]}(\gamma)}},
\end{eqnarray*}
which is always smaller than one. Furthermore, for $\gamma \in (0,1)$ we get $h_c=\sqrt{1-c}$.

In Figure \ref{fig2}, we plot $ER_s(\Delta)$ as a function of $\Delta \in[0,100]$ for $c \in \{0.1, 0.5,0.8,0.95\}$. We also add the plot of $ER_p(\Delta)$ in order to compare the error rate of the two decision rules. Since only finite values of $\Delta$ are considered in the figure we put $\gamma=0$ and also choose $n_1=n_2$. Finally, the ratio $\frac{n_1+n_2-2}{n_1+n_2-p-1}$ in the definition of $a$ is approximated by $\frac{1}{1-c}$. We observe that $ER_s(\Delta)$ lies very close to $ER_p(\Delta)$ for $c=0.1$. However, the difference between two curves becomes considerable as $c$ growths, especially for $c=0.95$ and larger values of $\Delta$.

\begin{figure}
 \centerline{
\includegraphics[width=12cm]{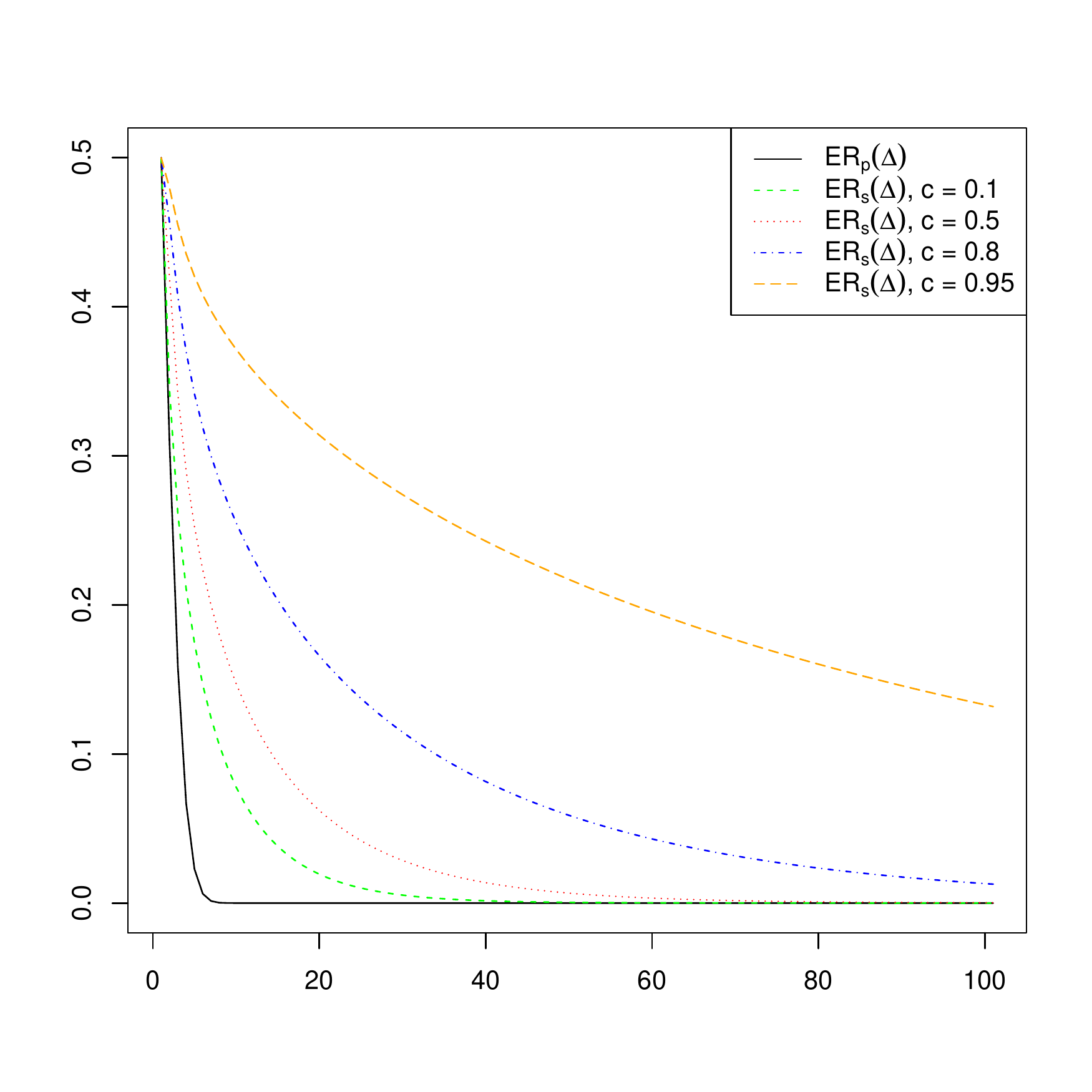}
}
 \caption{Error rates $ER_p(\Delta)$ and $ER_s(\Delta)$ as functions of $\Delta$ for $c \in \{0.1, 0.5,0.8,0.95\}$.}
\label{fig2}
 \end{figure}

\subsection{Finite-sample performance} \labmarg{s5}
In this section we present the results of the simulation study. The aim is to investigate how good the asymptotic distribution of a linear combination of the discriminant function coefficients $\hat \theta=\mathbf l^T \hat{\mathbf a}$ performs in the case of the finite dimension and of the finite sample size. For that reason we compare the asymptotic distribution of the standardized $\hat \theta$ as given in Theorem \ref{th2} to the corresponding exact distribution obtained as a kernel density approximation with the Eppanechnikov kernel applied to the simulated data from the standardized exact distribution which are generated following the stochastic representation of Corollary \ref{c1}: (i) first, $\xi_b,z_{0;b},u_b$ are sampled independently from the corresponding univariate distributions provided in Corollary \ref{c1}; (ii) second, $\hat \theta_b$ is computed by using \eqref{stoch_pres_theta} and standardized after that as in Theorem \ref{th2}; (iii) finally, the previous two steps are repeated for $b=1,...,B$ times to obtain a sample of size $B$. It is noted that $B$ could be large to ensure a good performance of the kernel density estimator.

In the simulation study, we take $\mathbf{l}=\mathbf{1}_{p}$ ($p$-dimensional vector of ones). The elements of $\boldsymbol{\mu}_{1}$ and $\boldsymbol{\mu}_{2}$ are drawn from the uniform distribution on $\left[-1, 1\right]$ when $\gamma >0$, while the first ten elements of $\bmu_1$ and the last ten elements of $\bmu_2$ are generated from the uniform distribution on $\left[-1, 1\right]$ and the rest of the components are taken to be zero when $\gamma=0$. We also take $\mathbf{\Sigma}$ as a diagonal matrix, where every element is uniformly distributed on $\left(0, 1\right]$. The results are compared for several values of $c =\{0.1, 0.5, 0.8, 0.95\}$ and the corresponding values of $p, n_1,n_2$. Simulated data consist of $N=10^{5}$ independent repetitions. In both cases $\gamma=0$ and $\gamma>0$ we plot two asymptotic density functions to investigate how robust are the obtained results to the choice of $\gamma$.

In Figures \ref{f1}-\ref{f2}, we present the results in the case of equal and large sample sizes (data are drawn with $\gamma=0$ in Figure \ref{f1} and with $\gamma>0$ in Figure \ref{f2}), while the plots in Figure \ref{f3} correspond to the case of one small sample and one large sample. We observe that the impact of the incorrect specification of $\gamma$ is not large, while some deviations are observed in Figure \ref{f3} for small values of $c$. If $c$ increases, then the difference between the two asymptotic distributions becomes negligible. In contrast, larger differences between the asymptotic distributions and the finite-sample one are observed for $c=0.8$ and $c=0.95$ in all figures, although their sizes are relatively small even in such extreme case.

%%%%%%%%%%%%%%%%%%%%%%%%%%%%%%%%%%%%%%%%%%%%%%%%%%%%%%%%%%%%%%%%%%%%
%\bibliographystyle{unsrtnat}
\bibliographystyle{apalike}
\bibliography{Ref}

%%%%%%%%%%%PLOTS%%%%%
%%%%%%%%% Figure 1:   PLOT \gamma=0       %%%%%%%%
\begin{figure}
 \centerline{
\includegraphics[width=8.8cm]{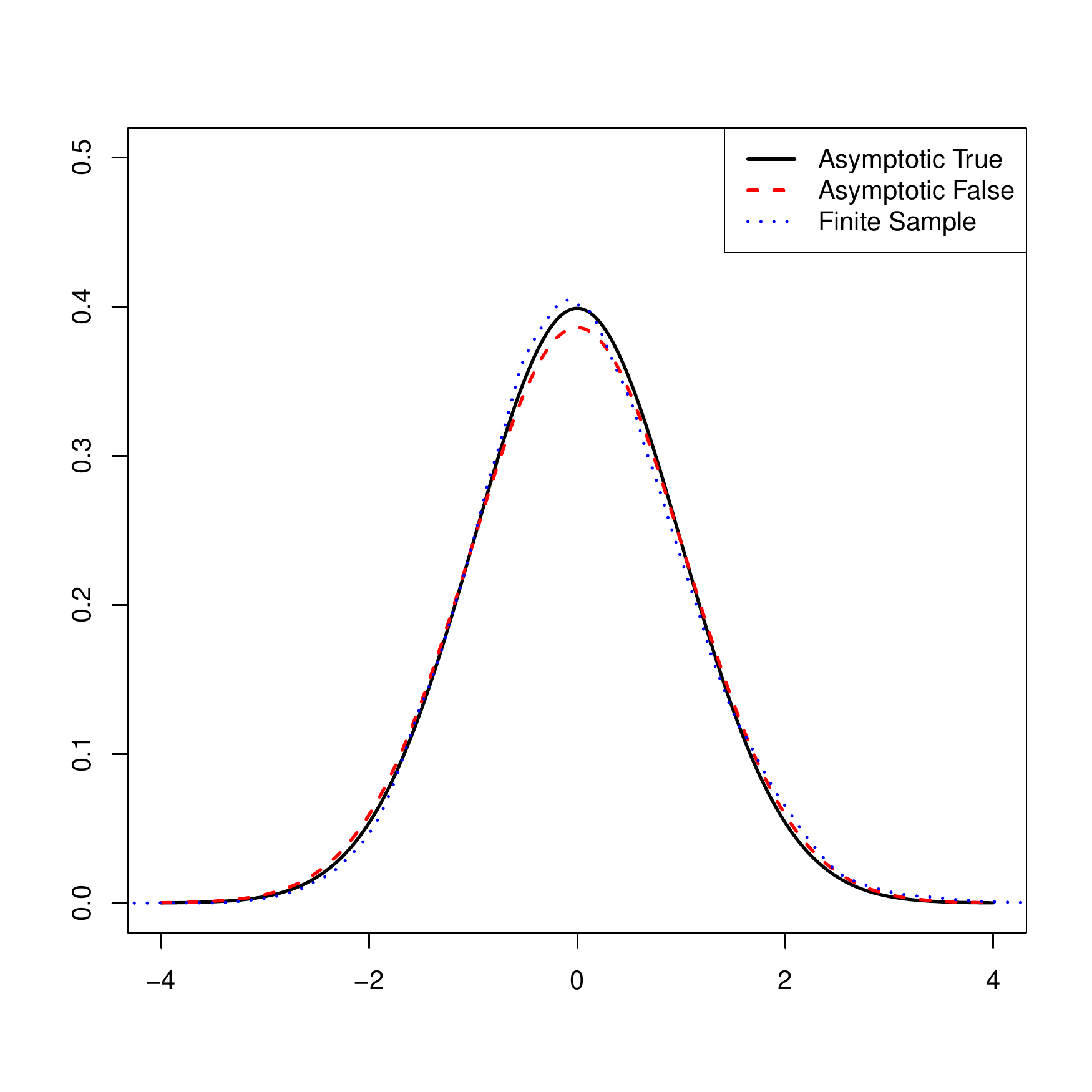}
 \includegraphics[width=8.8cm]{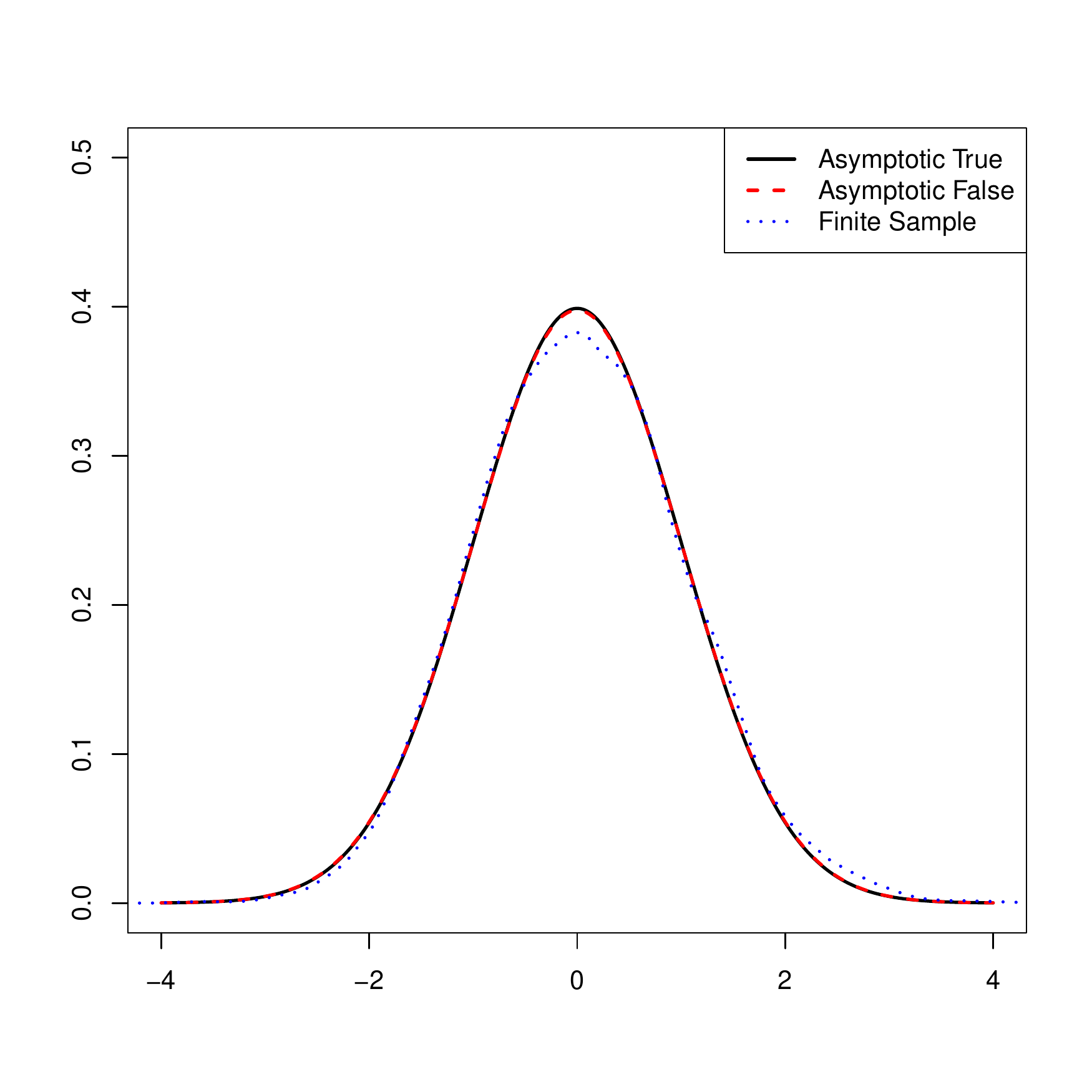}
}
${ a)\;\; p=50,~ n_{1}=n_{2}=250} \hspace{115pt}  { b)\;\; p=250,~ n_{1}=n_{2}=250}$
 \centerline{
\includegraphics[width=8.8cm]{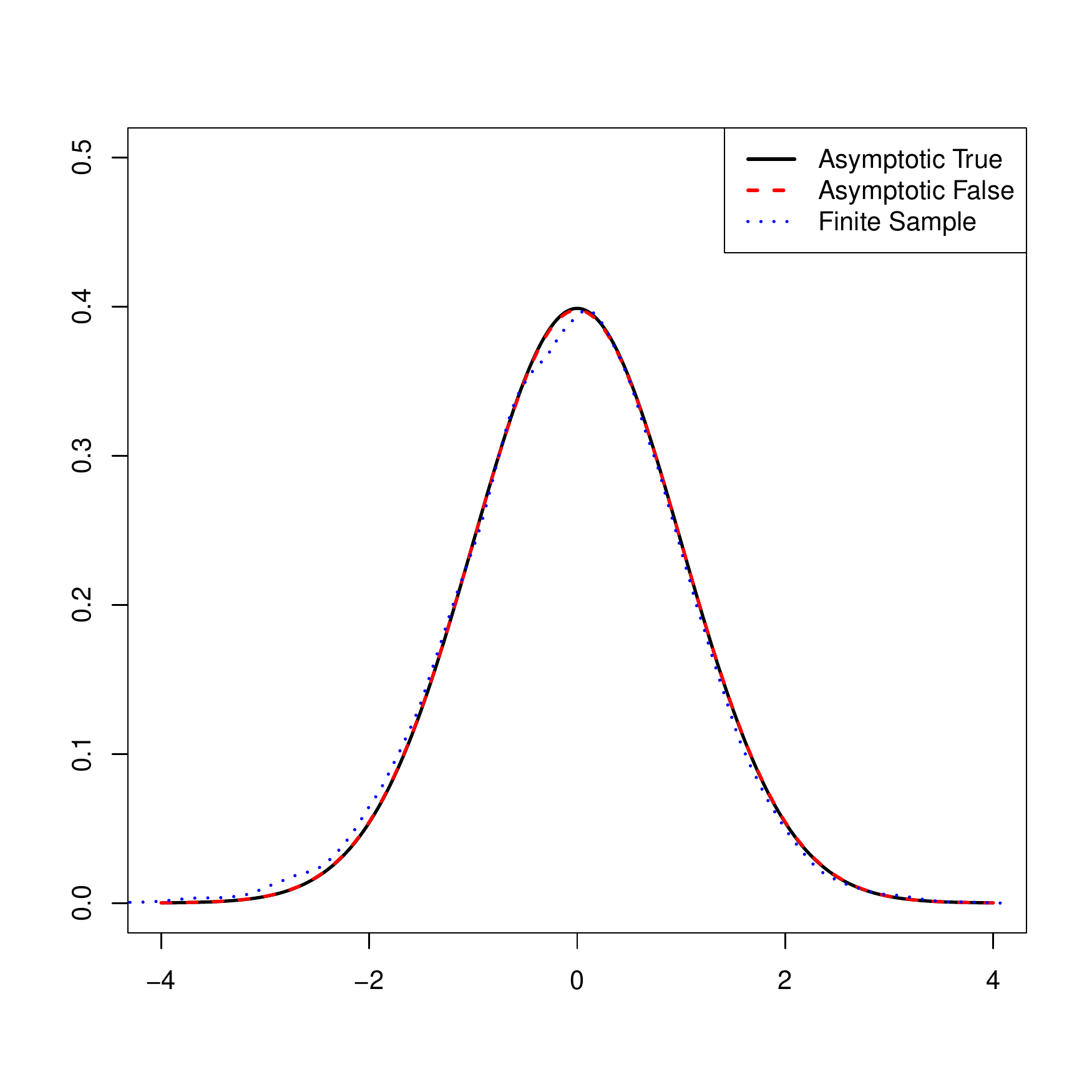}
 \includegraphics[width=8.8cm]{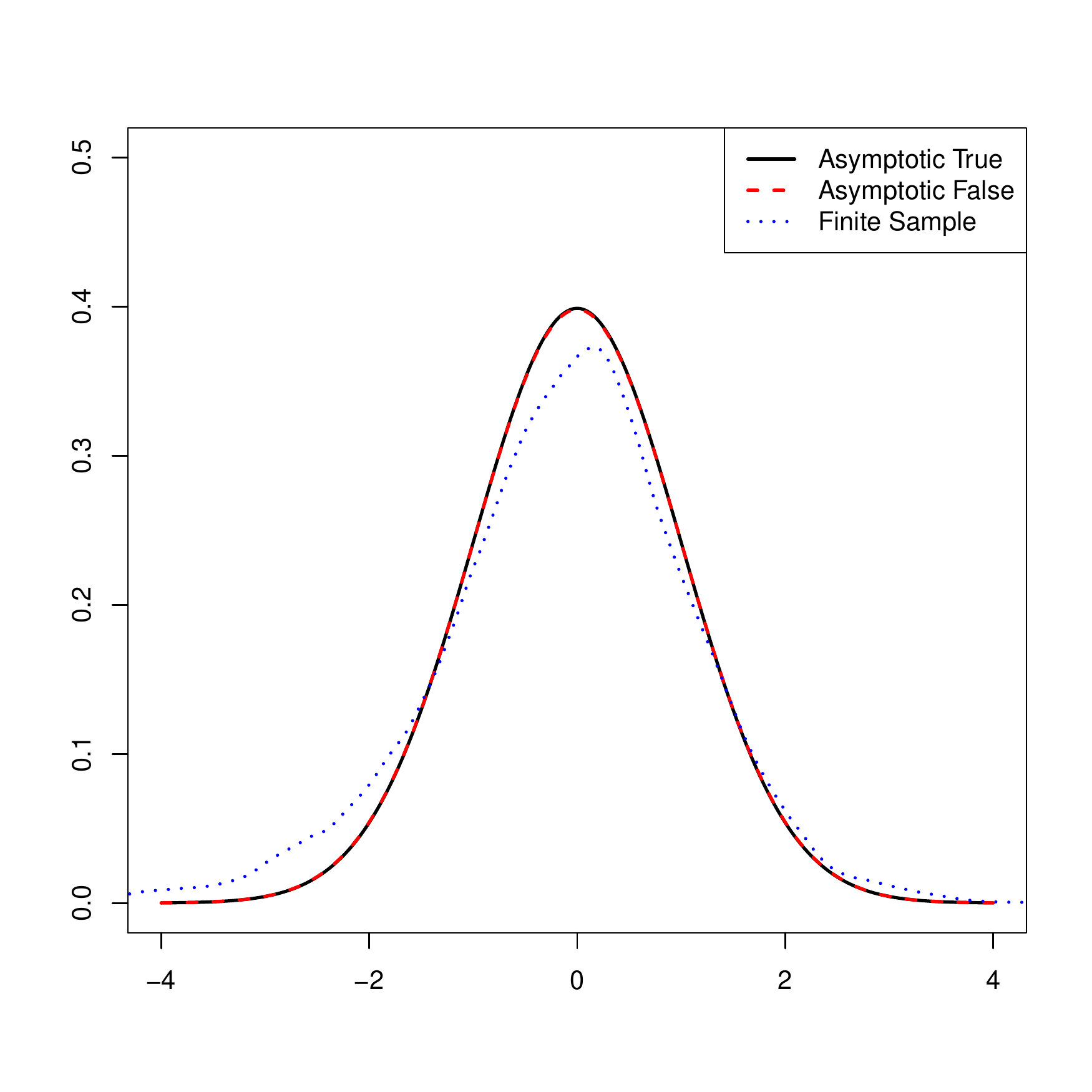}
}\\
${ c)\;\; p=400, ~n_{1}=n_{2}=250} \hspace{115pt}  { d)\;\; p=475, ~n_{1}=n_{2}=250}$
 \caption{The kernel density estimator of the asymptotic distribution and standard normal for $\hat \theta$ as given in Theorem~\ref{th2} for $\gamma=0$ and $c=\{ 0.1, 0.5 , 0.8, 0.95\}$.}
\label{f1}
 \end{figure}

%%%%%%%%% Figure 2:   PLOT \gamma>0       %%%%%%%%
\begin{figure}
 \centerline{
\includegraphics[width=8.8cm]{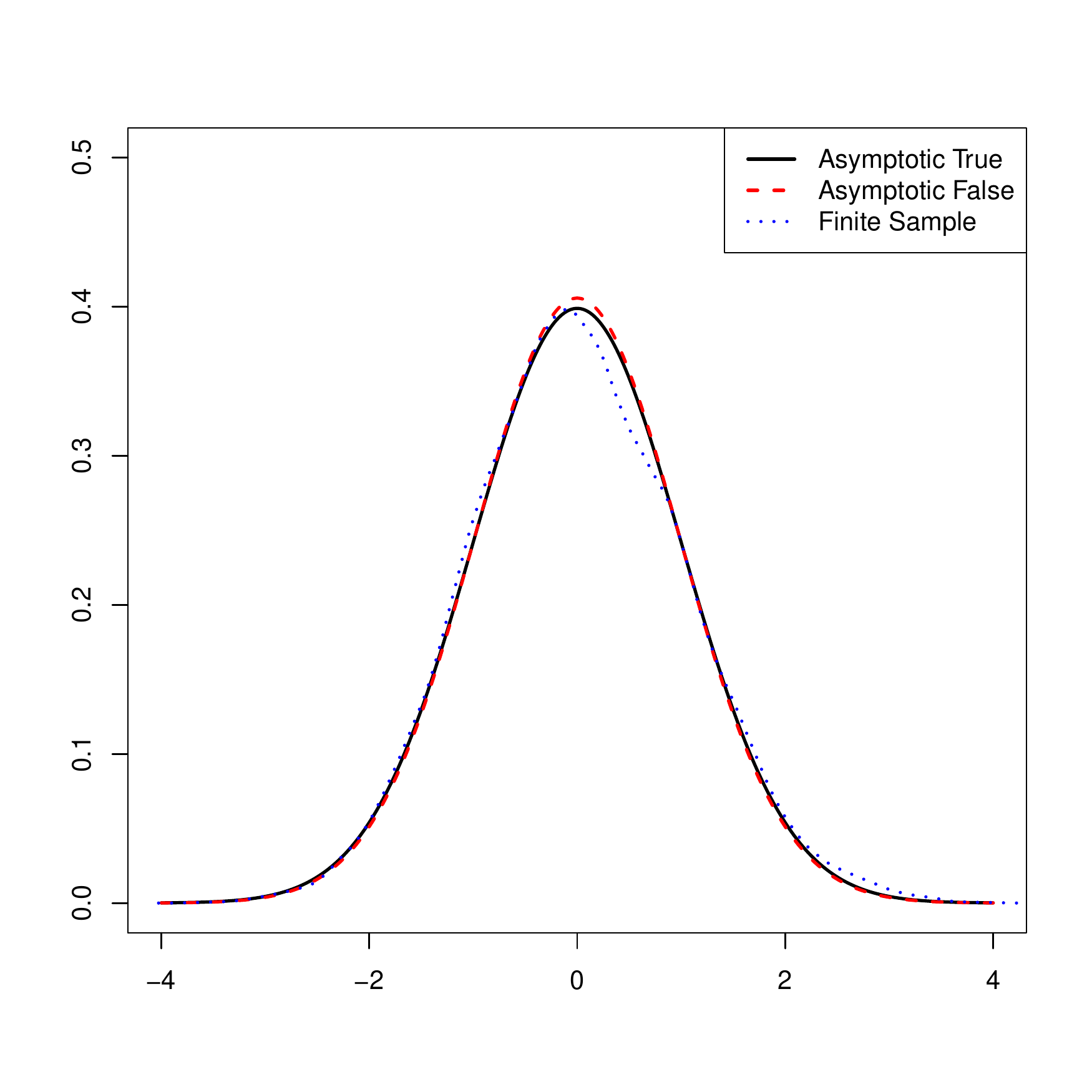}
 \includegraphics[width=8.8cm]{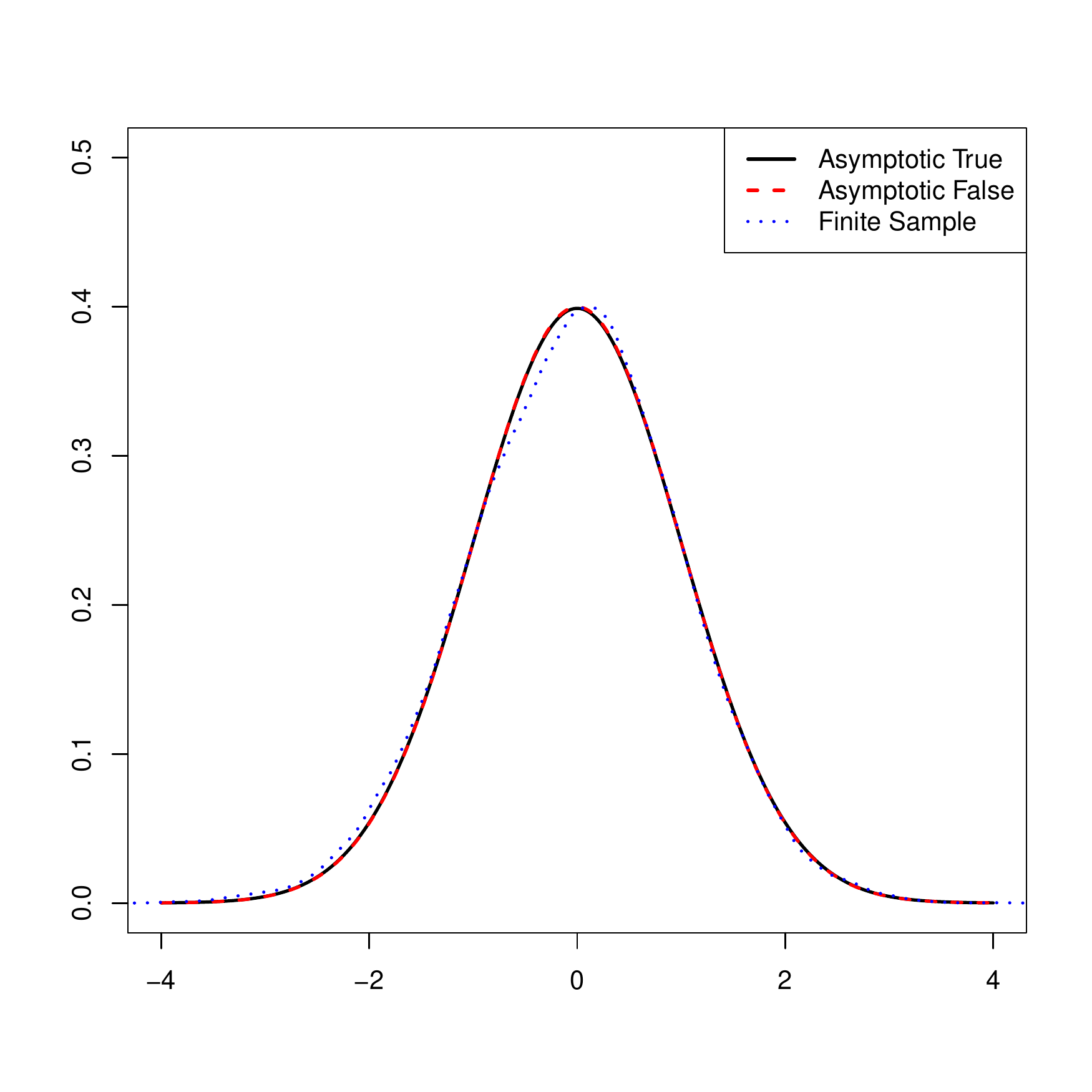}
}
${ a)\;\; p=50,~ n_{1}=n_{2}=250} \hspace{115pt}  { b)\;\; p=250,~ n_{1}=n_{2}=250}$
 \centerline{
\includegraphics[width=8.8cm]{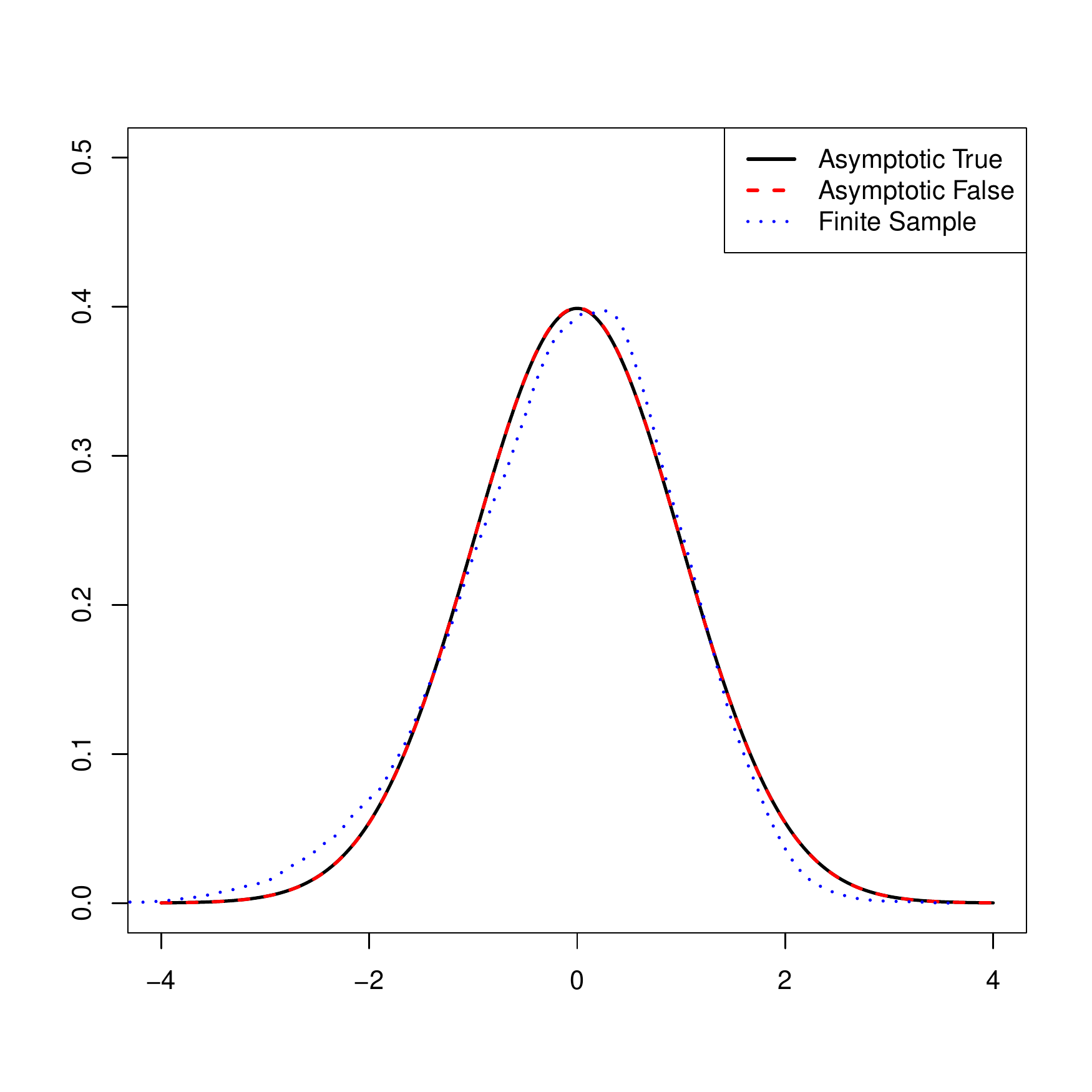}
 \includegraphics[width=8.8cm]{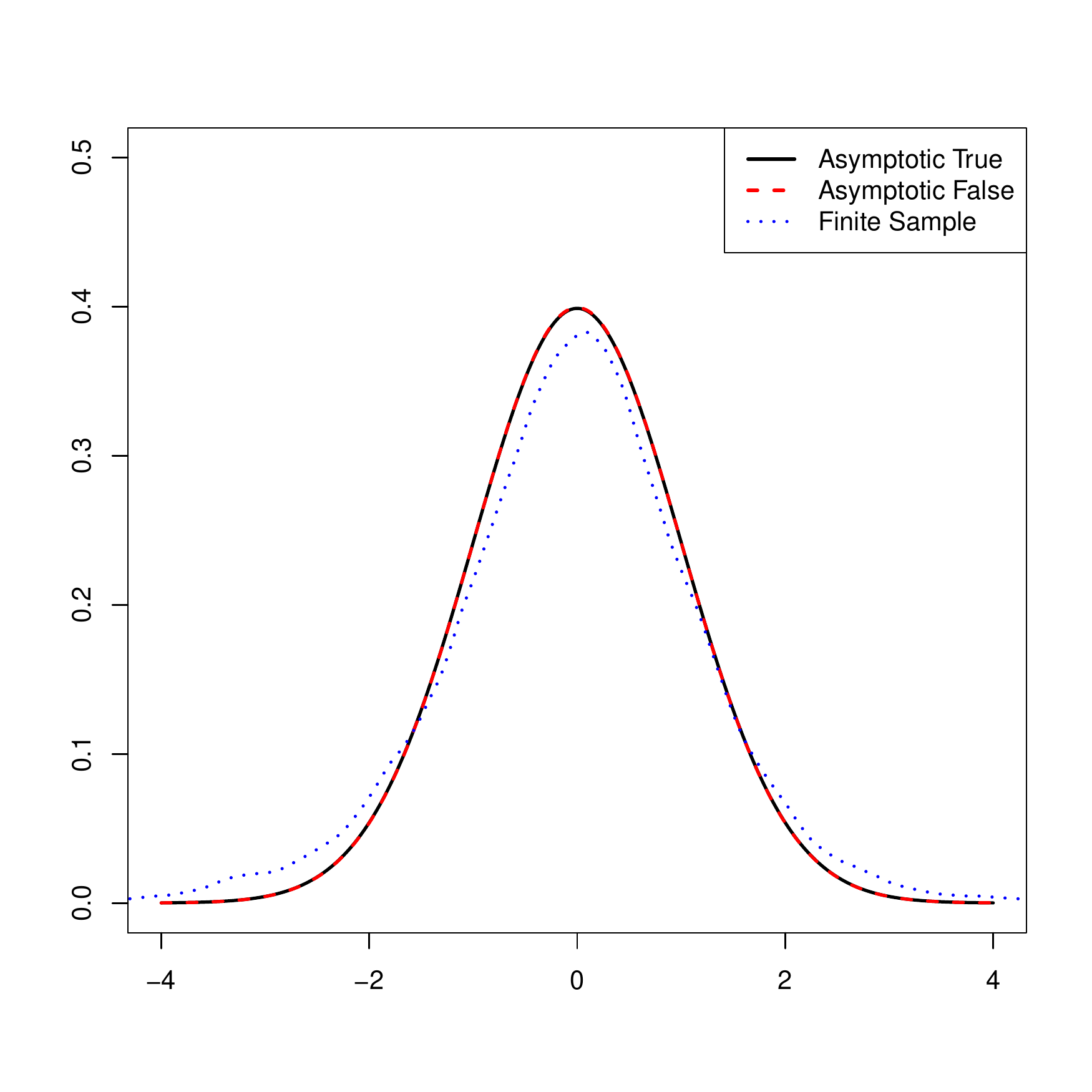}
}\\
${ c)\;\; p=400,~ n_{1}=n_{2}=250} \hspace{115pt}  { d)\;\; p=475, ~n_{1}=n_{2}=250}$
 \caption{The kernel density estimator of the asymptotic distribution and standard normal for $\hat \theta$ as given in Theorem~\ref{th2} for $\gamma > 0$ and $c=\{ 0.1, 0.5 , 0.8, 0.95\}$.}
\label{f2}
 \end{figure}

%%%%%%%%% Figure 3:   PLOT \gamma>0       %%%%%%%%
\begin{figure}
 \centerline{
\includegraphics[width=8.8cm]{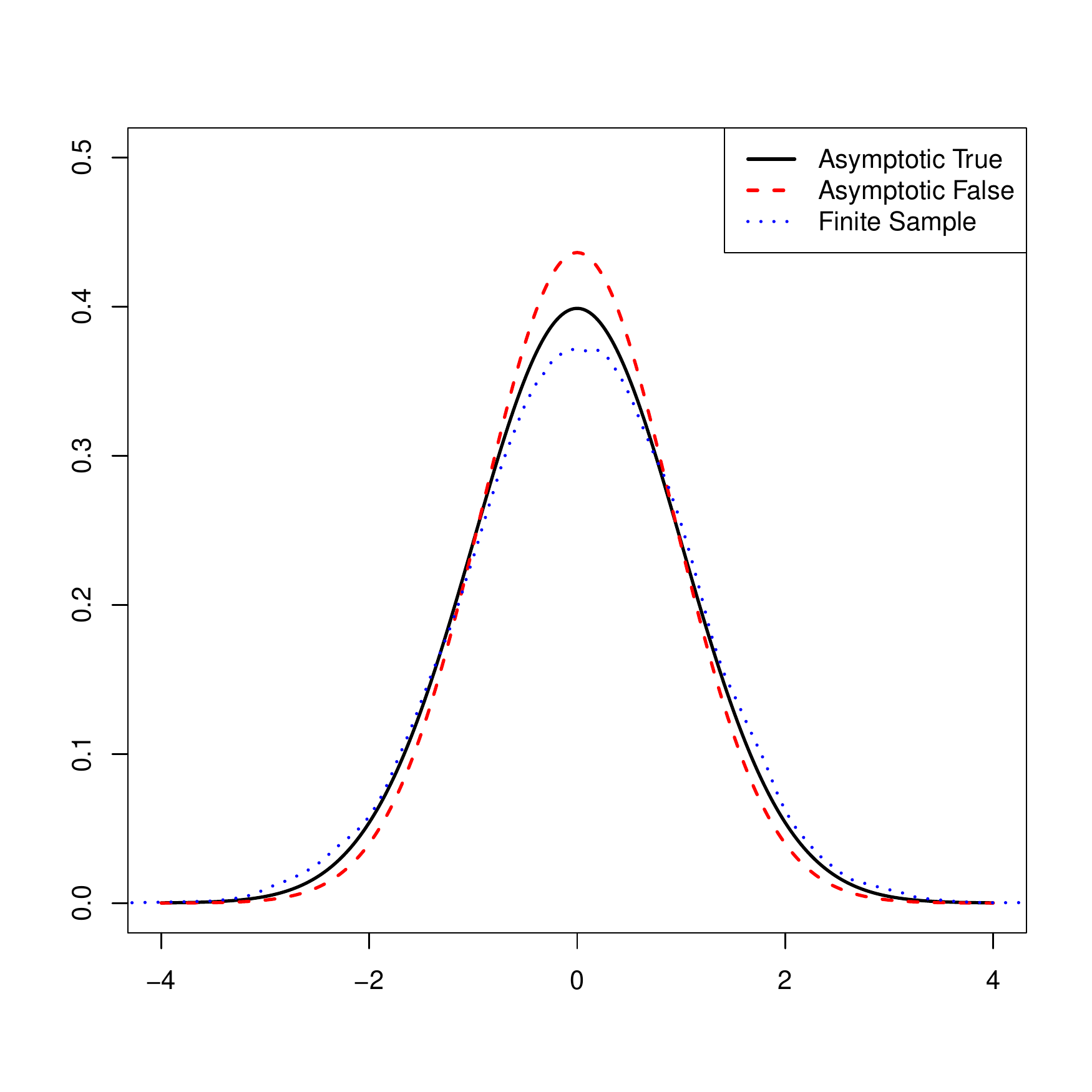}
 \includegraphics[width=8.8cm]{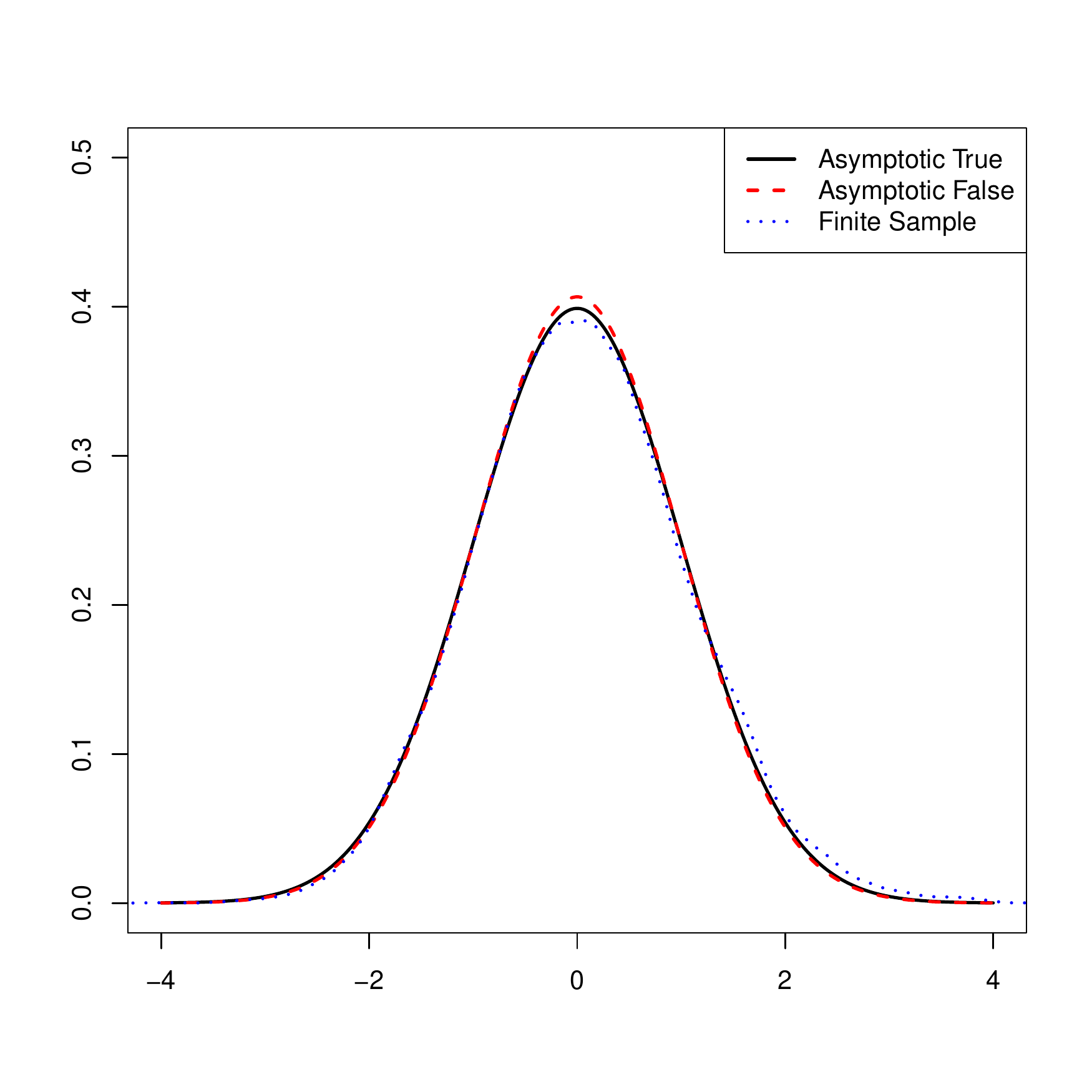}
}
${ a)\;\; p=50,~n_{1}=25,~n_{2}=475} \hspace{115pt}  { b)\;\; p=250, ~n_{1}=25,~n_{2}=475}$
 \centerline{
\includegraphics[width=8.8cm]{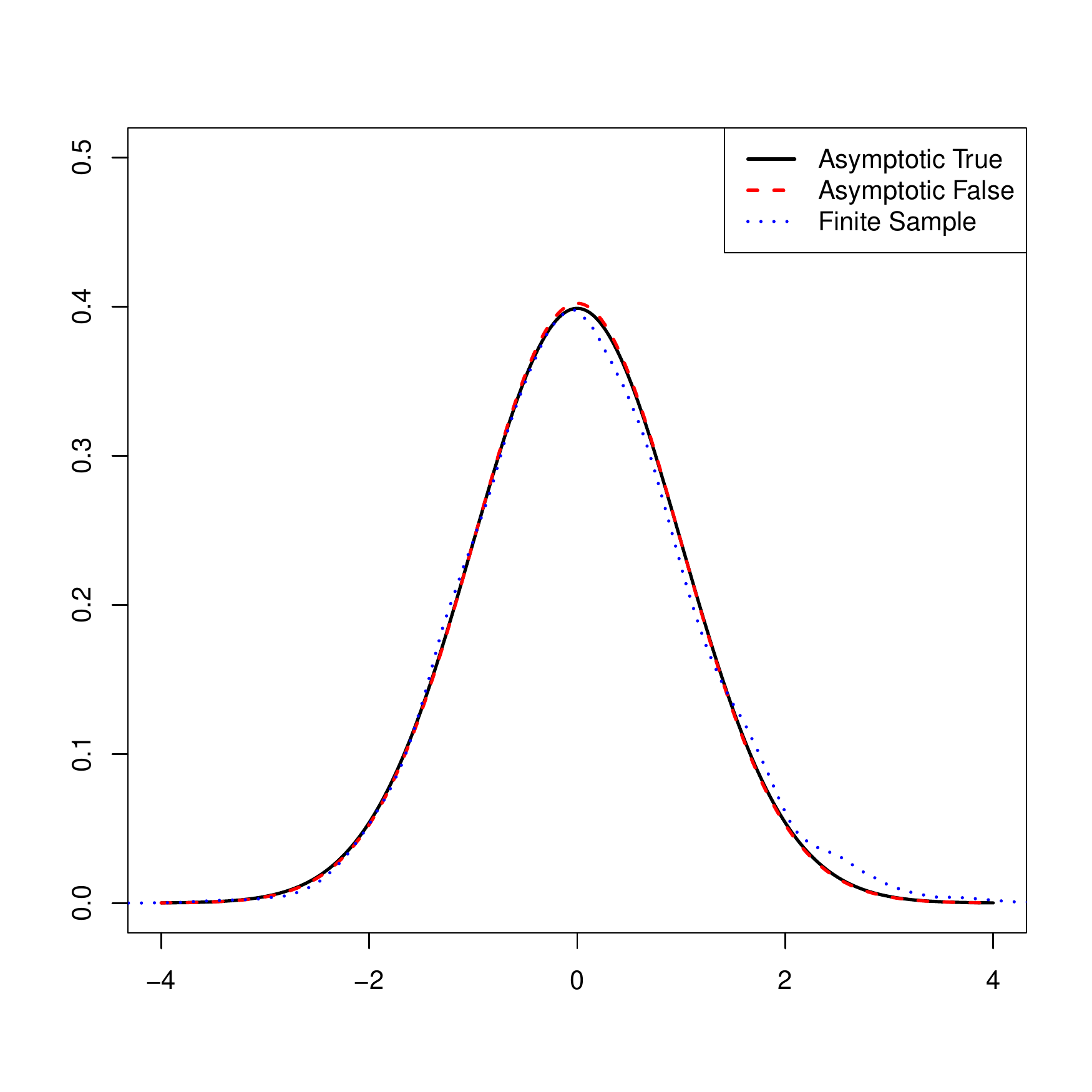}
 \includegraphics[width=8.8cm]{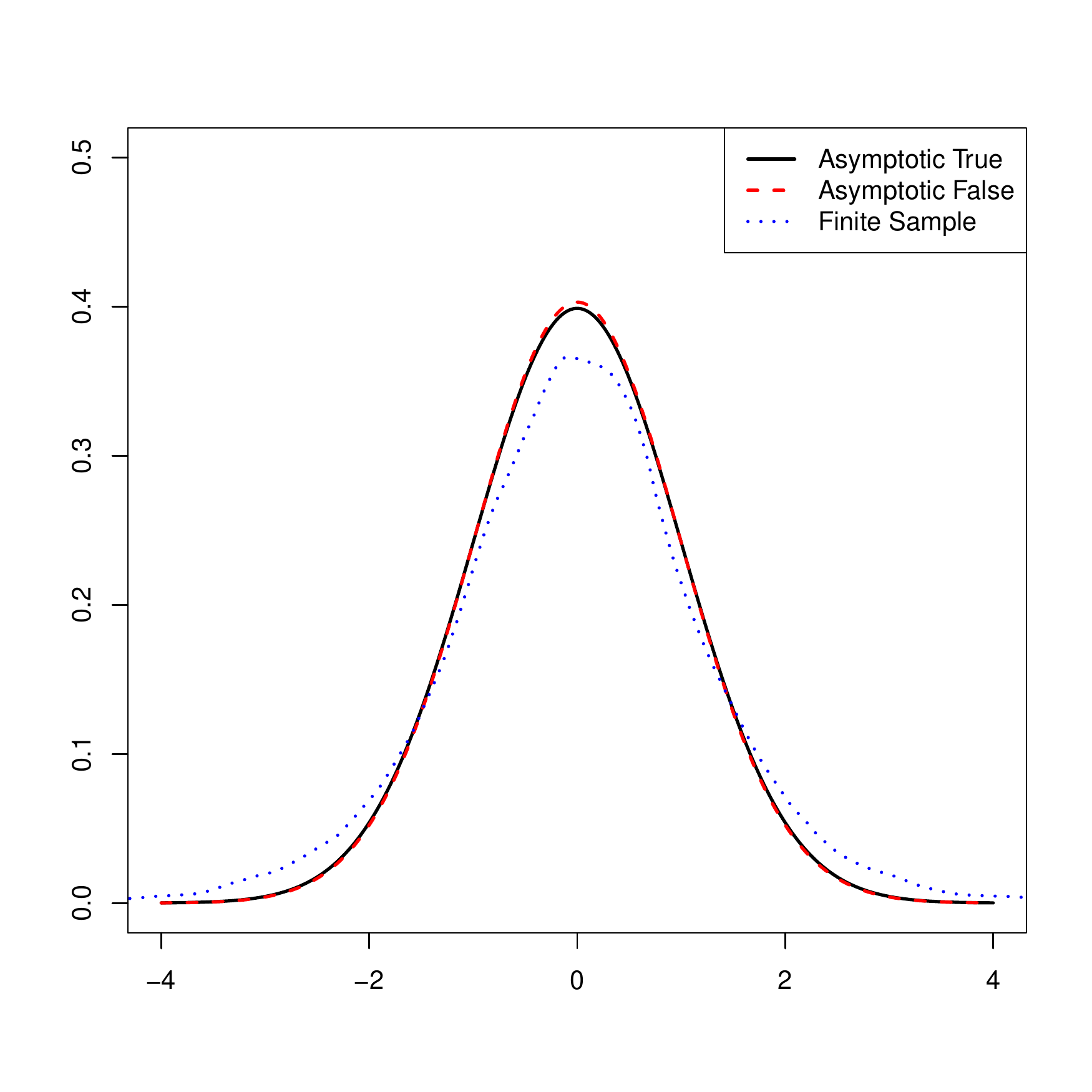}
}\\
${ c)\;\; p=400,~n_{1}=25,~n_{2}=475} \hspace{115pt}  { d)\;\; p=475,~n_{1}=25,~n_{2}=475}$
 \caption{The kernel density estimator of the asymptotic distribution and standard normal for $\hat \theta$ as given in Theorem~\ref{th2} for $\gamma > 0$ and $c=\{ 0.1, 0.5 , 0.8, 0.95\}$.}
\label{f3}
 \end{figure}

%%%%%%HISTOGRAMS%%%%%

%%%%%%%%%%%PLOTS%%%%%

\end{document}